\documentclass[10pt]{amsart}
\usepackage{amssymb}
\usepackage{tikz-cd}
\usepackage{amsrefs}
\usepackage{xfrac}
\usepackage{thmtools}
\usepackage{thm-restate}

\title[A Classification of Order Convergence]{A Classification of Order Convergence via a Transfinite Fatou Hierarchy}

\author{A.~Avil\'es}
\address{Universidad de Murcia, Departamento de Matem\'aticas, Campus de Espinardo 30100 Murcia, Spain.}
\email{avileslo@um.es}

\author {C.~Rosendal}
\address{Department of Mathematics\\University of Maryland\\4176 Campus Drive - William E. Kirwan Hall\\College Park, MD 20742-4015\\USA}
\email{rosendal@umd.edu}
\urladdr{sites.google.com/view/christian-rosendal/}

\author{M.~A.\ Taylor}
\address{Department of Mathematics\\
ETH Z\"urich, Ramistrasse 101, 8092 Z\"urich, Switzerland.
} \email{mitchell.taylor@math.ethz.ch}

\author{P.~Tradacete}
\address{Instituto de Ciencias Matem\'aticas (CSIC-UAM-UC3M-UCM)\\
Consejo Superior de Investigaciones Cient\'ificas\\
C/ Nicol\'as Cabrera, 13--15, Campus de Cantoblanco UAM\\
28049 Madrid, Spain.}
\email{pedro.tradacete@icmat.es}

\newcommand{\norm}[1]{\lVert#1\rVert}
\newcommand{\Norm}[1]{\big\lVert#1\big\rVert}
\newcommand{\NORM}[1]{\Big\lVert#1\Big\rVert}
\newcommand{\BNORM}[1]{\Bigg\lVert#1\Bigg\rVert}

\newcommand{\forkindep}[1][]{\mathop{\mathop{\vcenter{\hbox{\oalign{\noalign{\kern-.3ex}
\hfil$\vert$\hfil\cr\noalign{\kern-.7ex}$\smile$\cr\noalign{\kern-.3ex}}}}}\displaylimits_{#1}}}

\newcommand{\Mgd}[2]{\big\{\,  {#1}\;\big|\; {#2} \,\big\} }
\newcommand{\MGd}[2]{\Big\{ \, {#1}\;\Big|\; {#2} \,\Big\} }

\newcommand{\maths}[1]{\[\begin{split}{#1}\end{split}\]}

\newcommand{\conv}[2]{\mathop{\underset{#2}{\overset{#1}\longrightarrow}}}

\newcommand{\maps}[1]{\mathop{\overset{#1}\longrightarrow}}

\newcommand{\finitegames}[8]{
$$
{
\begin{array}{ccccccccccccccc}
{\bf I} &&{#1}&       & {#3}&      &&\ldots  &&{#5}&        &{#7}&& \\
{\bf II}&&       &{#2}&        &{#4} & &\ldots   &&  &{#6}&&{#8}&
\end{array}
}
$$
}

\usetikzlibrary{decorations.markings}
\tikzset{negated/.style={
        decoration={markings,
            mark= at position 0.5 with {
                \node[transform shape] (tempnode) {$\backslash$};
            }
        },
        postaction={decorate}
    }
}

\newcommand {\N}{\mathbb N}

\newcommand {\Q}{\mathbb Q}
\newcommand {\R}{\mathbb R}

\newcommand{\om}{\omega}
\newcommand{\eps}{\epsilon}

\newcommand{\tom} {\emptyset}

\newcommand{\saa}{\Rightarrow}
\newcommand{\equi}{\Leftrightarrow}

\newcommand {\Del}{ \; \Big| \;}
\newcommand {\del}{ \; \big| \;}

\newcommand {\e} {\exists}
\renewcommand {\a} {\forall}

\theoremstyle{plain}
\newtheorem{thm}{Theorem}[section]
\newtheorem*{theorem*}{Theorem}

\newtheorem{lemme}[thm]{Lemma}
\newtheorem{prop} [thm] {Proposition}
\newtheorem{defi} [thm] {Definition}

\theoremstyle{definition}

\newtheorem{rem}[thm]{Remark}
\newtheorem{exa}[thm]{Example}

\definecolor{groen}{rgb}{0,0.5,.7}
\definecolor{gul}{rgb}{0.94,0.8,0}
\definecolor{blaa}{rgb}{0.16,0,0.6}
\definecolor{roed}{rgb}{1,0,0}

\makeindex

\thanks{C.~Rosendal~was partially supported by the U.S.~National Science Foundation under Award Numbers DMS-2246986 and DMS-2204849. A.~Avil\'{e}s was supported by MICIU/AEI /10.13039/501100011033/ and ERDF-A way of making Europe (project PID2021-122126NB-C32). A.~Avil\'{e}s and P.~Tradacete were supported by Fundaci\'{o}n S\'{e}neca - ACyT Regi\'{o}n de Murcia. P.~Tradacete was partially supported by grants PID2020-116398GB-I00, PID2024-162214NB-I00 and CEX2023-001347-S funded by MCIN/AEI/10.13039/501100011033, as well as by a 2022 Leonardo Grant for Researchers and Cultural Creators, BBVA Foundation.}

\begin{document}
\subjclass[2020]{Primary: 46B42, 46B15, Secondary: 03E15, 46H40, 54A20}
\keywords{Banach lattices, Order convergence, Descriptive complexity}

\begin{abstract}
    We investigate the descriptive complexity of order convergence in separable Banach lattices. While uniform convergence is Borel and $\sigma$-order convergence is known to be ${\bf \Delta}^1_2$, it is unclear in general when $\sigma$-order convergence is analytic.

We introduce a transfinite hierarchy of weakenings of the classical Fatou property, indexed by countable ordinals, and show that it provides a complete structural classification of this definability problem. For a separable Banach lattice 
$X$, we prove that the following are equivalent: (i) the set of decreasing positive sequences with infimum zero is Borel; (ii) $\sigma$-order convergence is analytic; and (iii) 
$X$ satisfies the $\alpha$-Fatou property for some countable ordinal $\alpha$.

We further establish that the hierarchy is proper: for every countable ordinal $\alpha$ there exists a separable Banach lattice with a countable $\pi$-basis that fails to be $\alpha$-Fatou, but is $\beta$-Fatou for some $\beta>\alpha$. Thus the Borel definability of order convergence is governed by a canonical ordinal invariant intrinsic to the lattice, and the descriptive complexity can be arbitrarily high below $\om_1$.

These results identify projective complexity as a genuine structural invariant in Banach lattice theory.
\end{abstract}

\maketitle

\section{Introduction}
The present paper develops a descriptive set theoretic classification of order convergence in separable Banach lattices. Our goal is to measure, in a precise and intrinsic way, the definability complexity of the most natural order-theoretic convergence relations and to identify the structural lattice properties governing this complexity.

A {\em Banach lattice} is a Banach space $X$ equipped with a lattice ordering $\leqslant$ that is compatible with both the linear and norm structure on $X$. Specifically,  for all $x,y,z\in X$ and $\lambda>0$, 
$$
x\leqslant y\; \Rightarrow\; \lambda x+z\leqslant \lambda y+z
\qquad\&\qquad 
|x|\leqslant |y|\;\Rightarrow \;\norm{x}\leqslant \norm{y},
$$
where $|x|=x\vee -x$. These axioms imply that the order relation  $\leqslant$ is norm closed in $X^2$.  

However, many other natural order-theoretic constructions exhibit dramatically higher descriptive complexity. For example, the sets
$$
X^{\uparrow\infty} =\Mgd{(x_n)\in X^\N}{0\leqslant x_1\leqslant x_2\leqslant \ldots \text{ and $(x_n)$ is order-unbounded in }X}$$
and 
$$ 
X_{\downarrow 0} =\Mgd{(x_n)\in X^\N}{x_1\geqslant x_2\geqslant \ldots \geqslant 0 =\inf_nx_n}
$$
can easily be seen to be coanalytic, i.e., ${\bf \Pi}^1_1$, but do not appear to be Borel in $X^\N$. 

This observation raises a fundamental structural question:
\begin{quote}
    How complicated, in the projective hierarchy, are the natural order convergence relations in a separable Banach lattice?
\end{quote}
Our interest in this question stems in part from the study of coordinate systems and bases in Banach lattices \cites{ARTT, taylor, gumenchuk}, where definability properties of order convergence play a decisive role. Every Banach lattice supports several distinct notions of sequential convergence.
\begin{itemize}
\item A sequence $(x_n)_{n=1}^\infty$ {\em converges uniformly} to $x$, denoted $x_n\maps{\sf u}x$, if there is some $z\in X$  so that
$$
\a m \;\a^\infty n\; |x_n-x|\leqslant \tfrac zm,
$$
\item a sequence $(x_n)_{n=1}^\infty$ {\em $\sigma$-order converges} to $x$, denoted $x_n\maps{\sf \sigma \sf o}x$, if there is some sequence $z_m\downarrow 0$ in $X$ so that
$$
\a m \;\a^\infty n\; |x_n-x|\leqslant z_m,
$$
\item a sequence $(x_n)_{n=1}^\infty$ {\em order converges} to $x$, denoted $x_n\maps{\sf o}x$, if there is some net $z_{\mu}\downarrow 0$ in $X$ so that
$$
\a  \mu\;\a^\infty n\; |x_n-x|\leqslant z_\mu.
$$
\end{itemize}
Here the notations $z_m\downarrow 0$ and $z_\mu\downarrow 0$ mean that $(z_m)$ is a decreasing sequence, respectively decreasing net, with infimum $0$, while the notation $\a^\infty n$ means for all but finitely many $n$. Figure \ref{diagram1} lists the relationships holding between these sequential convergence notions and it should be noted that no other implications hold in general Banach lattices.

\begin{figure}[!htb]\label{diagram1}
\begin{tikzcd}
&{\sf uniform} \arrow[-{Implies},double]{dr}\arrow[-{Implies},double]{dl}&\\ 
\sigma-{\text{}\sf order}\arrow[-{Implies},double]{d}{} &&     {\sf norm}\arrow[-{Implies},double]{d}{}  \\
  {\text{}\sf order} &&{\sf weak}\\
\end{tikzcd}
\caption{Implications between convergence types.}
\end{figure}
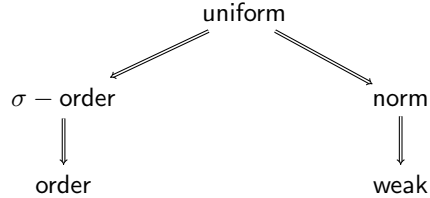
\begin{rem}
    It is a classical fact (see \cite[Remark 1.3]{Tay}) that, if $X$ is a Banach lattice of measurable functions (i.e.~an ideal in the space of measurable functions $L_0(\Omega,\Sigma,\mu)$ for some semi-finite measure space $(\Omega,\Sigma,\mu)$), we have $f_n\conv{\sf o}{n}f$ if and only if $f_n\conv{a.e.}{n}f$ and, moreover,  there exists a $g\in X_+$ satisfying  $|f_n|\leqslant g$ for all $n$ (i.e., the sequence $(f_n)$ is order bounded). Therefore, order convergence can be viewed as a generalization of dominated almost everywhere convergence to vector lattices. On the other hand, if $e\in X^+$, then we may equip the order ideal $I_e=\Mgd{x\in X }{ \exists \lambda\in \mathbb{R}\  |x|\leq \lambda e}$ with the norm $\|\cdot\|_e$ where for each $x\in I_e,$ $\|x\|_e$ is the smallest $\lambda=\lambda(x)$ which makes the definition of $I_e$ valid. It is well-known that $(I_e,\|\cdot\|_e)$ is lattice isometric to $C(K_e)$ for some compact Hausdorff space $K_e$. Moreover, in $C(K_e)$, $f_n\conv{\sf u}{n}f$ if and only if $f_n\conv{\|\cdot\|_\infty}{n}f$, so uniform convergence bonds the local norm convergences in $C(K_e)$ to a convergence in all of $X$. These convergences have numerous applications, and have been studied extensively in recent years \cite{MR4366912,MR3731703,MR3818109}.
\end{rem}
At first glance, the above convergences appear to have high descriptive complexity. For example, $\sigma$-order convergence is naturally expressible as a ${\bf \Sigma}^1_2$, meaning that the corresponding subset of $X^\N\times X$ is the continuous image of a ${\bf \Pi}^1_1$ subset of a Polish space. On the other hand, it is unclear whether the relation $x_n\maps{\sf o}x$ is even projective, i.e., ${\bf \Sigma}^1_k$ for some $k$. Yet, as shown in \cite{ARTT}, in separable Banach lattices order and $\sigma$-order convergence coincide and the resulting relation is ${\bf \Delta}^1_2$, that is, both ${\bf \Sigma}^1_2$ and ${\bf \Pi}^1_2$. In contrast, uniform convergence is  Borel rather than the prima facie value ${\bf \Sigma}^1_1$.

The fact that $x_n\maps{\sigma\sf o}x$ is ${\bf \Delta}^1_2$ but not known to be ${\bf \Sigma}^1_1$ is a cause of some difficulties in \cite{ARTT}. Namely, to show that every { $\sigma$-order basis} $(e_n)$ for a Banach lattice $X=[e_n]$ is also a Schauder basis, additional set theoretical assumptions are needed. However, this need vanishes once the relation $x_n\maps{\sigma\sf o}x$ can be shown to be ${\bf \Sigma}^1_1$.

We will here define a transfinite hierarchy of successive weakenings of the {\em Fatou property}  for Banach lattices, that is, the property that whenever we have elements $0\leqslant x_1\leqslant x_2\leqslant \ldots \leqslant x$ with $x=\sup_nx_n$, then $\norm x=\sup_n\norm {x_n}$. In turn, our first main result shows that this hierarchy exhausts the class of separable Banach lattices in which the set $X_{\downarrow0}$ is Borel or equivalently the relation $x_n\maps{\sigma\sf o}x$ is analytic.

\begin{thm}\label{thm:Fatou}
The following  are equivalent for a separable Banach lattice $X$.
\begin{enumerate}
\item The set 
$$
X_{\downarrow0}=\Big\{(x_n)_{n=1}^\infty\in X^\N\Del x_1\geqslant x_2\geqslant \ldots\geqslant 0=\inf_nx_n\Big\}
$$
is Borel,
\item the set 
$$
\Big\{ \big((x_n)_{n=1}^\infty,x\big)\in X^\N\times X\Del x_n\conv{\sigma\sf o}{n}x\Big\}
$$ 
is  analytic,
\item $X$ is $\alpha$-Fatou for some $\alpha<\om_1$.
\end{enumerate}
\end{thm}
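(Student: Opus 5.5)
The plan is to prove the cycle (1)$\Rightarrow$(2)$\Rightarrow$(1) by elementary descriptive set theory, and to obtain (1)$\Leftrightarrow$(3) from the boundedness theorem for ${\bf \Pi}^1_1$-ranks. For this I will read the $\alpha$-Fatou property as saying that a certain ordinal rank, defined on decreasing positive sequences, never exceeds $\alpha$ on $X_{\downarrow 0}$.

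\emph{(1)$\Rightarrow$(2).} By definition, $x_n\conv{\sigma\sf o}{n}x$ if and only if
$$
\exists (z_m)\in X^\N\;\Big[(z_m)\in X_{\downarrow0}\;\&\;\a m\;\exists k\;\a n\geqslant k\;\; |x_n-x|\leqslant z_m\Big].
$$
The bracketed condition is Borel in $\big((x_n),x,(z_m)\big)$. Indeed, the lattice operations are continuous and the order is closed, so the second conjunct is $F_{\sigma\delta}$, and the first conjunct is Borel by (1). Projecting along the Polish space $X^\N$ then shows that the relation in (2) is analytic.

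\emph{(2)$\Rightarrow$(1).} For a decreasing sequence $x_1\geqslant x_2\geqslant\ldots\geqslant 0$ we have $x_n\conv{\sigma\sf o}{n}0$ if and only if $\inf_nx_n=0$. If the infimum is $0$, take $z_m=x_m$. Conversely, if $z_m\downarrow 0$ dominates the tails of $(x_n)$, then any lower bound $w$ of all the $x_n$ satisfies $w\leqslant x_n\leqslant z_m$ for each $m$, hence $w\leqslant 0$. Therefore $X_{\downarrow0}$ is the intersection of the closed set of decreasing positive sequences with the section at $x=0$ of the relation in (2), and so it is analytic. It is also coanalytic, since
$$
(x_n)\in X_{\downarrow0}\iff (x_n)\text{ is decreasing and positive}\;\&\;\a w\in X\;\big[(\a n\; w\leqslant x_n)\Rightarrow w\leqslant 0\big],
$$
and the matrix of this formula is Borel. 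Suslin's theorem then makes $X_{\downarrow 0}$ Borel.

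\emph{(1)$\Leftrightarrow$(3).} This is the core of the argument. Following the paper's definition of the $\alpha$-Fatou hierarchy, I expect there to be a transfinite derivative, indexed by countable ordinals, that successively weakens the norm condition $\norm{\sup x_n}=\sup\norm{x_n}$. It should assign to each decreasing positive sequence an ordinal rank $\rho(x_n)\leqslant\om_1$, with $\rho(x_n)<\om_1$ exactly when $(x_n)\in X_{\downarrow0}$, and with $X$ being $\alpha$-Fatou exactly when $\rho\leqslant\alpha$ on all of $X_{\downarrow0}$. Two things then need to be verified.

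\begin{enumerate}
\item For each fixed $\alpha<\om_1$, the set $\{\rho\leqslant\alpha\}$ is Borel. I will prove this by induction on $\alpha$: each level of the derivative is obtained from the previous one by a countable Boolean/quantifier operation over a countable dense subset of $X$, using separability. This gives (3)$\Rightarrow$(1).
\item The rank $\rho$ is a ${\bf \Pi}^1_1$-rank on the coanalytic set $X_{\downarrow0}$. Then (1)$\Rightarrow$(3) follows from the boundedness theorem: an analytic (here Borel) subset of a ${\bf \Pi}^1_1$ set carrying a ${\bf \Pi}^1_1$-rank has bounded rank.
\end{enumerate}

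The main obstacle is item 2, namely showing that the Fatou-derivative rank really is a ${\bf \Pi}^1_1$-rank, i.e.\ that its associated prewellordering relations are ${\bf \Pi}^1_1$ and ${\bf \Sigma}^1_1$ uniformly. My first attempt would be to code the derivative as a well-founded tree on a countable dense set of candidate lower bounds. I would attach to $(x_n)$ the tree of finite sequences witnessing the failure of each successive Fatou estimate, check that $(x_n)\in X_{\downarrow0}$ iff this tree is well-founded, and show that the map to trees is Borel. I would then show that $\rho$ agrees with the tree rank up to a fixed ordinal shift. The tree rank is a canonical ${\bf \Pi}^1_1$-rank on well-founded trees, so this transfers the ${\bf \Pi}^1_1$-rank property to $\rho$, and boundedness applies.
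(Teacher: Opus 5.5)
Your treatment of (1)$\Leftrightarrow$(2) is correct and is exactly the paper's argument. The gap is in (1)$\Leftrightarrow$(3), which you never connect to the actual definition.

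By Definition~\ref{defi:alphaFatou}, $\alpha$-Fatou refers to $\rho\big(\Psi((z_n))\big)$. This is the rank of a tree on the \emph{uncountable} set $X^+\setminus\{0\}$, defined by the \emph{non-strict} conditions $\norm{(y_i-z_n)^+}\leqslant\norm{y_i}/3^i$ and $\norm{y_{i+1}-y_i}\leqslant\norm{y_i}/3^i$. What you leave as ``check that the tree is well-founded iff $(z_n)\in X_{\downarrow0}$'' is the real content, namely Lemma~\ref{WF}. There, the factors $3^{-i}$ make any branch Cauchy with a limit $y>0$, and $\norm{(y-z_n)^+}=0$ gives $y\leqslant z_n$. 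Conversely, a lower bound $y>0$ yields a branch in $P^{<\N}$ via $\norm{y_i-y}<\norm{y}/7^i$, with the slack coming from $(y-z_n)^+=0$. Only with this lemma does $\alpha$-Fatou reduce to the statement that $\rho\big(\Psi((z_n))\big)\leqslant\alpha$ for all $(z_n)\in X_{\downarrow0}$.

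The step that actually fails is ``$\rho$ agrees with the rank of the countable tree up to a fixed shift''. For the same reason, ``$\{\rho\leqslant\alpha\}$ is Borel by induction over a dense set'' is unjustified. Because the defining inequalities are closed, nodes of $\Psi$ need not be approximable by nodes in $P^{<\N}$. So $\rho\big(\Psi\cap P^{<\N}\big)$ can be strictly smaller than $\rho(\Psi)$, and it depends on $P$. For instance, take $X_1$ of Section~\ref{sec:Fatou-ex} with $z_n=(0,\ldots,0,1,1,\ldots)$. Every one-term node $y$ satisfies $y(i)\leqslant\lim_jy(j)$ for all $i$, which is a closed nowhere dense condition on $X_1^+\setminus\{0\}$. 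Choosing $P$ to avoid it gives $\rho(\Psi\cap P^{<\N})=1<2\leqslant\rho(\Psi)$. Hence boundedness for the ${\bf \Pi}^1_1$-rank on $WF_P$ only bounds the countable ranks and does not give (3).

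The paper proves (1)$\Rightarrow$(3) by applying the Kunen--Martin boundedness theorem for analytic well-founded relations directly to the proper-extension relation on $\{((z_n),s): (z_n)\in X_{\downarrow0},\ s\in\Psi((z_n))\}$. This set is Borel precisely because $X_{\downarrow0}$ is. For (3)$\Rightarrow$(1), the paper uses only the trivial inequality $\rho(\Psi\cap P^{<\N})\leqslant\rho(\Psi)$ together with Lemma~\ref{WF}, to write $X_{\downarrow0}=\{(z_n)\in X_\downarrow:\Psi((z_n))\cap P^{<\N}\in WF^\alpha_P\}$ (Lemma~\ref{lemma:BONS}).

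Your countable-tree route can be rescued. Relax $3^{-i}$ to $2\cdot 3^{-i}$, and map $\Psi$ coordinatewise, with $i$-dependent approximations from $P$, into the relaxed tree restricted to $P$; this map preserves proper extension and so dominates ranks. But that argument has to be supplied.
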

The ordinal invariant introduced here plays a role analogous to classical indices in Banach space theory, such as the Szlenk index and Bourgain’s $\ell_1$--index, in that it measures structural complexity via well-founded ranks.

As we shall see later on, there are several natural conditions on $X$ that force $X_{\downarrow0}$ to be Borel. For example, this is the case when $X$ is either $\sigma$-order continuous, $\sigma$-order complete or has a countable $\pi$-basis $\{b_n\}_{n=1}^\infty$, that is, elements $b_n>0$ such that every $x>0$ is minorized by some $b_n$.  Similarly, $X^{\uparrow\infty}$ is Borel when $X$ is $\sigma$-order continuous, $\sigma$-order complete or has a {\em countable dominating set} $\{d_n\}_{n=1}^\infty$, that is, such that every $x\in X$ is dominated by some $d_n$.

The second main result is that the hierarchy of $\alpha$-Fatou properties is proper, in the sense that it is not exhausted at any level below $\om_1$.

\begin{thm}\label{thm:Fatou2}
For every countable ordinal $\alpha$, there is a separable Banach lattice with a countable $\pi$-basis that fails to be $\alpha$-Fatou.
\end{thm}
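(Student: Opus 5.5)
Since the $\alpha$-Fatou property has not yet been defined at this point, I will fix the natural reading and build the construction around it. $X$ is $\alpha$-Fatou when a rank $\rho_X$ is bounded by $\alpha$. This rank is the well-founded rank of a tree $T_X$ that attempts to witness $\inf_n x_n\neq 0$ for a decreasing sequence $(x_n)$, using finitely many rational data at each stage. For $X$ separable with a countable $\pi$-basis $\{b_n\}$, $X_{\downarrow 0}$ is Borel, so by Theorem~\ref{thm:Fatou} such an $X$ is $\beta$-Fatou for some countable $\beta$. The task is therefore to make the rank exceed any prescribed $\alpha$. The plan is to encode an arbitrary countable well-founded tree $T$ of rank $>\alpha$ into a concrete lattice $X_T$, and to show that the Fatou-rank of $X_T$ dominates the rank of $T$.

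The construction I would try is the following. Let $T\subseteq\N^{<\N}$ be well-founded of rank $>\alpha$. Start from $c_{00}(T)$, realised as functions on the countable discrete set $T$; the unit vectors $e_s$ will form a $\pi$-basis. Equip it with a lattice norm that is a weakened sup/Fatou-failing norm along branches, for instance
$$\norm{f}=\sup_{s\in T}|f(s)|+\sup_{s\in T}\limsup_{n}\ \Big|\sum_{t\supseteq s^\frown n}f(t)\,w_t\Big|,$$
or, more robustly, define the norm through the closure of the sublattice of $\ell_\infty(T)$ generated by the $e_s$ together with the "limit" functionals attached to each node. Let $X_T$ be the completion. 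At a node $s$, the sets $\{t\supseteq s^\frown n\}$ for large $n$ give decreasing sequences whose infimum is $0$ coordinatewise, while their norm does not go to zero. This is a Fatou failure at one level, and nesting the failures along $T$ forces a transfinite iteration to detect it. The ideal generated by the $e_s$ is order dense, so any $x>0$ has $x(s)>0$ for some $s$ and then majorises $\epsilon e_s$. This gives the countable $\pi$-basis, and separability is clear.

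The key steps, in order, are as follows. (1) Verify that $X_T$ is a Banach lattice and that the $e_s$ form a $\pi$-basis. (2) Compute infima of decreasing sequences in $X_T$, showing that $\inf_n x_n=0$ iff $x_n(s)\to0$ for every $s$; this uses that point evaluations are lattice homomorphisms and that the $e_s$ are a $\pi$-basis. (3) By induction on the rank of a node $s\in T$, show that the local rank $\rho_X$ attached to the derivation at (or below) $e_s$ is at least $\operatorname{rank}_T(s)$. This is done by producing, for each child $s^\frown n$, witnesses of rank $\geq\operatorname{rank}_T(s^\frown n)$ and gluing them through the non-Fatou norm at $s$. Hence $\rho_{X_T}\geq\operatorname{rank}(T)>\alpha$, so $X_T$ fails to be $\alpha$-Fatou.

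The main obstacle is step (3), the lower bound on the rank. It requires a norm that is precisely calibrated so that each level of $T$ produces exactly one new layer of the derivation, and not all of the failures collapsing at once, which would give only $1$-Fatou failure. I would first try an inductive definition: let $X_{T_s}$ be built from $\bigoplus_n X_{T_{s^\frown n}}$ via a $c$-sum twisted by a limit functional, and prove the rank claim by transfinite induction, handling limit ranks by a countable disjoint sum of the pieces.
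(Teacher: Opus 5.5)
Your proposal is a plan rather than a proof. The only nontrivial part of the theorem is your step (3), which you leave as ``the main obstacle'', and you never fix a norm for which it could be carried out.

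The one explicit norm you write down collapses. For $f\in c_{00}(T)$ and any $s$, only finitely many $n$ satisfy $\{t\supseteq s^\frown n\}\cap\operatorname{supp}f\neq\emptyset$. So every $\limsup_n$ term is $0$, the norm is just $\sup_s|f(s)|$, and the completion is $c_0(T)$. That space is order continuous, hence Fatou, hence $1$-Fatou.

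Your heuristic is also aimed at the wrong object. Failing to be $\alpha$-Fatou means finding a \emph{single} sequence $z_n\downarrow 0$ with $\rho(\Psi((z_n)))>\alpha$. The nodes of $\Psi((z_n))$ are strings $(y_1,\ldots,y_k)$ with $\norm{(y_i-z_n)^+}\leqslant\norm{y_i}/3^i$ for all $n$ and $\norm{y_{i+1}-y_i}\leqslant\norm{y_i}/3^i$. Your plan does not explain how Fatou failures at different nodes of $T$, which concern different decreasing sequences, would produce long strings in $\Psi$ of one fixed sequence.

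Moreover, when $z_n\downarrow0$, no $y>0$ in the ideal generated by finitely many $e_s$ can occur in $\Psi((z_n))$. Indeed, $(y-z_n)^+$ stays in a finite-dimensional band and converges to $y$ coordinatewise, hence in norm, so $\norm{(y-z_n)^+}\to\norm{y}$. The rank therefore cannot be attached to the atoms $e_s$. It must be carried by vectors ``at infinity'', and you need a mechanism that makes the defects $\norm{(y-z_n)^+}$ smaller and smaller relative to $\norm{y}$, matching the tolerance that shrinks by a factor $3$ per level.

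This is exactly what the paper supplies. It argues by induction on $\alpha$ with stronger invariants:
a norm-one lattice homomorphism $\phi_\alpha\colon X_\alpha\to\R$;
a sequence $z^\alpha_n\downarrow0$ in $S_\alpha=\{x\geqslant0:\norm{x}_\alpha=\phi_\alpha(x)=1\}$;
and a countable tree $T_\alpha\subseteq\Psi((z^\alpha_n))\cap S_\alpha^{<\N}$ with $\rho(T_\alpha)>\alpha$.
The base case is $c$ with the norm $\max\{\frac13\norm{t}_{\infty},|\lim_n t_n|\}$.

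The successor step is the renorming $\norm{x}_{\alpha+1}=\max\{\frac17\norm{x}_\alpha,|\phi_\alpha(x)|\}$. Elements of $S_\alpha$ keep norm $1$. For $x,y\in S_\alpha$, both $x-y$ and $(x-y)^+$ lie in $\ker\phi_\alpha$, so their norms shrink by the factor $7$. This more than absorbs the passage from tolerance $3^{-i}$ to $3^{-(i+1)}$, and lets one prepend $z^\alpha_1$ to every node of $T_\alpha$.

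At limit stages the paper takes the $\ell_\infty$-sum of the $X_{\alpha_n}$, restricted to sequences for which $\lim_n\phi_{\alpha_n}(x^n)$ exists. It uses the diagonal sequence $(z^{\alpha_1}_k,z^{\alpha_2}_k,\ldots)$ and a countable subtree of coordinatewise-glued tails of branches.

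Your ``$c$-sum twisted by a limit functional'' resembles this limit step. But gluing only reaches the supremum of the earlier ranks; the actual increase comes from the kernel-shrinking renorming. Without $\phi$, $S_\alpha$ and that renorming (or a genuine substitute), your transfinite induction has nothing to propagate, and step (3) remains unproved.
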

Observe that, having countable $\pi$-bases, the spaces of Theorem \ref{thm:Fatou2} must be $\beta$-Fatou for some $\beta>\alpha$. Although we do not have an example of a separable Banach lattice $X$ in which $X_{\downarrow0}$ fails to be Borel, the techniques behind Theorem \ref{thm:Fatou2} imply that there can be no uniform Borel manner of defining the property $x_n\downarrow 0$ across all separable Banach lattices $X$.


\section{Definability of order convergences}
As mentioned in the introduction, the prima facie complexities of uniform and order convergence are higher than their actual complexities. So we begin by recalling the better estimates obtained in \cite{ARTT}.

\begin{prop}[Lemmas 2.1 \& 2.3 \cite{ARTT}]\label{prop:complexity}
For a sequence $(x_n)$ and a vector $x$ in a Banach lattice $X$, we have that
\begin{enumerate}
   \item $x_n\conv{\sf u}{n}x$ if and only if
$$
\a \eps>0\; \e k\; \a m\geqslant k\quad \BNORM{\bigvee_{n=k}^m|x_n-x|}<\eps,
$$ 
\item $x_n\conv{\sf o}{n}x$ if and only if
\begin{equation*}
    \a y>0\;\e z\; \big(y\not\leqslant z \;\;\&\;\; \a^\infty n\; |x_n-x|\leqslant z\big).
\end{equation*} 
\end{enumerate}
\end{prop}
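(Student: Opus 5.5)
We prove the two items separately. Both are direct manipulations of the definitions. The only real point is in the reverse direction of (1): a regulator $z$ has to be manufactured from norm estimates on finite suprema.

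\emph{Item (1).} Suppose first that $x_n\maps{\sf u}x$ with regulator $z$, and let $\eps>0$. Choose $m_0$ with $\norm z/m_0<\eps$, and then $k$ such that $|x_n-x|\leqslant z/m_0$ for all $n\geqslant k$. For every $m\geqslant k$ we get $\bigvee_{n=k}^m|x_n-x|\leqslant z/m_0$, so the norm of this finite supremum is below $\eps$.

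For the converse, choose $k_1<k_2<\ldots$ such that
$$\NORM{\bigvee_{n=k_j}^m|x_n-x|}<4^{-j}\qquad\text{for all } m\geqslant k_j .$$
Fix $j$ and put $u^m=\bigvee_{n=k_j}^m|x_n-x|$. For $m'\geqslant m\geqslant k_i$ with $i\geqslant j$, the lattice inequality $a\vee b-a\leqslant b$ gives
$$0\leqslant u^{m'}-u^m\leqslant \bigvee_{n=m+1}^{m'}|x_n-x|,$$
and the right-hand side has norm less than $4^{-i}$. Hence $(u^m)_m$ is norm Cauchy and converges to some $w_j\in X$.

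Since the positive cone is norm closed, $w_j\geqslant |x_n-x|$ for every $n\geqslant k_j$. Moreover $\norm{w_j}\leqslant 4^{-j}$. The series $z=\sum_j 2^jw_j$ therefore converges absolutely. For $n\geqslant k_j$ we have $|x_n-x|\leqslant w_j\leqslant 2^{-j}z$, because all $w_i$ are positive. Given any $m$, take $j$ with $2^{-j}\leqslant 1/m$; then $|x_n-x|\leqslant z/m$ for all $n\geqslant k_j$. So $z$ witnesses uniform convergence. The main step to check here is the Cauchy estimate for $(u^m)$, which is exactly what allows the suprema to exist in $X$ without any completeness assumption on the order.

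\emph{Item (2).} Suppose $z_\mu\downarrow0$ witnesses $x_n\maps{\sf o}x$, and let $y>0$. Since $\inf_\mu z_\mu=0$, the element $y$ is not a lower bound of the net, so $y\not\leqslant z_\mu$ for some $\mu$. Then $z=z_\mu$ satisfies both requirements.

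Conversely, assume the stated condition and let
$$Z=\Mgd{z\in X}{\a^\infty n\ |x_n-x|\leqslant z}.$$
Every element of $Z$ is $\geqslant0$, and $Z$ is nonempty by the hypothesis applied to any $y>0$. (If $X=\{0\}$ everything is trivial.) The set $Z$ is closed under finite meets, since $z\wedge z'$ dominates $|x_n-x|$ whenever both $z$ and $z'$ do, and this happens for cofinitely many $n$. Thus $Z$, ordered by reverse inclusion of the order, is a directed set, and the identity map $(z)_{z\in Z}$ is a decreasing net.

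It remains to show $\inf Z=0$. First, $0$ is a lower bound of $Z$. Now let $w$ be any lower bound of $Z$. Since each $z\in Z$ satisfies $z\geqslant0$ and $z\geqslant w$, also $z\geqslant w\vee0=w^+$. If $w^+>0$, the hypothesis applied to $y=w^+$ yields some $z\in Z$ with $w^+\not\leqslant z$, a contradiction. Hence $w^+=0$, i.e.\ $w\leqslant0$. So $Z\downarrow0$, and by definition of $Z$ we have $\a^\infty n\ |x_n-x|\leqslant z$ for every $z\in Z$. This witnesses $x_n\maps{\sf o}x$.
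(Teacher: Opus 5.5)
The paper gives no proof of this proposition; it imports it from \cite{ARTT}, so there is no in-paper argument to compare yours with. Your proof is correct and complete.

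In (1), the key inequality $0\leqslant u^{m'}-u^m\leqslant\bigvee_{n=m+1}^{m'}|x_n-x|$ is valid because both $a=u^m$ and $b$ are positive; positivity is what makes $a\vee b-a=(b-a)^+\leqslant b$ true. The $w_j$ then dominate $|x_n-x|$ for $n\geqslant k_j$ because the cone is closed, and the bound $2^jw_j\leqslant z$ gives the regulator. In (2), the set $Z$ of eventual dominators is downward directed. The hypothesis forces every lower bound $w$ of $Z$ to satisfy $w^+=0$, which gives $\inf Z=0$. This is the standard argument.

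There is one simplification in (1). The Cauchy estimate and the infinite suprema $w_j$ are not needed. Keep the same $k_1<k_2<\cdots$ and set $v_j=\bigvee_{n=k_j}^{k_{j+1}-1}|x_n-x|$. This is a finite supremum with $\norm{v_j}<4^{-j}$, and you can take $z=\sum_j 2^jv_j$. If $n\geqslant k_j$, then $n\in[k_i,k_{i+1})$ for some $i\geqslant j$, so $|x_n-x|\leqslant v_i\leqslant 2^{-i}z\leqslant 2^{-j}z$. So the step you call the main point can be bypassed entirely, although your version of it is also correct.

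Two small wording points:
\begin{enumerate}
\item When choosing the $k_j$ increasing, note that enlarging $k$ only shrinks $\bigvee_{n=k}^m|x_n-x|$, which is why this is possible.
\item In (2), ``ordered by reverse inclusion of the order'' should read ``directed by the reverse of the lattice order''.
\end{enumerate}
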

Recall on the other hand that the relation $x_n\conv{\sigma\sf o}{n}x$ is easily seen to be ${\bf \Sigma}^1_2$, since 
\maths{
x_n\conv{\sigma\sf o}{n}x
&\quad\equi\quad \e (z_m)\in X_{\downarrow 0}\; \a m\; \a^\infty n \; |x_n-x|\leqslant z_m
}
and $X_{\downarrow 0}$ is ${\bf \Pi}^1_1$.

\begin{prop}[Proposition 2.4 \cite{ARTT}]\label{prop conv agree}
Let $X$ be a separable Banach lattice. Then a sequence $(x_n)$ in $X$ order converges to $x\in X$ if and only if it $\sigma$-order converges to $x$.  In particular, the set 
\begin{equation}\label{compl of sigma-o}
    \Big\{ \big((x_n)_{n=1}^\infty,x\big)\in X^\N\times X\Del x_n\conv{\sigma\sf o}{n}x\Big\}
\end{equation}
is ${\bf \Delta}^1_2$, whereas
$$
\Big\{ \big((x_n)_{n=1}^\infty,x\big)\in X^\N\times X\Del x_n\conv{\sf u}{n}x\Big\}
$$ 
is Borel.
\end{prop}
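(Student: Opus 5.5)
The implication from $\sigma$-order to order convergence is immediate, since a decreasing sequence with infimum $0$ is a decreasing net with infimum $0$. For the converse, I will not work with the net directly. Instead I will extract a countable subfamily of it by a Lindel\"of argument.

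Suppose $x_n\conv{\sf o}{n}x$ via a net $z_\mu\downarrow 0$. For each $\mu$ consider
$$
U_\mu=\Mgd{y\in X_+\setminus\{0\}}{y\not\leqslant z_\mu}.
$$
Since the order relation $\leqslant$ is norm closed, $\{y: y\leqslant z_\mu\}$ is closed, so each $U_\mu$ is relatively open in $X_+\setminus\{0\}$. Because $\inf_\mu z_\mu=0$, no $y>0$ lies below every $z_\mu$. Hence the sets $U_\mu$ cover $X_+\setminus\{0\}$.

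Now $X$ is separable and metrizable, so the subspace $X_+\setminus\{0\}$ is Lindel\"of. We may therefore choose countably many indices $\mu_1,\mu_2,\ldots$ such that $\bigcup_j U_{\mu_j}=X_+\setminus\{0\}$. Define
$$
w_m=z_{\mu_1}\wedge\cdots\wedge z_{\mu_m}.
$$
Then $(w_m)$ is decreasing and positive.

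We check that $\inf_m w_m=0$. Any $y>0$ lies in some $U_{\mu_j}$, so $y\not\leqslant z_{\mu_j}$. Hence $y\not\leqslant w_m$ for all $m\geqslant j$, and so $y$ is not a lower bound of $(w_m)$. Thus $w_m\downarrow0$.

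We also check that $(w_m)$ witnesses $\sigma$-order convergence. For each $i$ we have $|x_n-x|\leqslant z_{\mu_i}$ for all but finitely many $n$. Taking the finitely many $i\leqslant m$ together gives $\a^\infty n\; |x_n-x|\leqslant w_m$. So $x_n\conv{\sigma\sf o}{n}x$.

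For the complexity claims, the set \eqref{compl of sigma-o} is ${\bf \Sigma}^1_2$ by the computation preceding the proposition. For ${\bf \Pi}^1_2$, I will combine the equivalence just proved with Proposition \ref{prop:complexity}(2). This expresses $\sigma$-order convergence as
$$
\a y\;\big(y\leqslant 0 \;\vee\; \e z\,(y\not\leqslant z\;\&\;\a^\infty n\;|x_n-x|\leqslant z)\big).
$$
The matrix in this formula is Borel, because $\leqslant$ is closed and the lattice operations are continuous. The quantifier $\e z$ then yields a ${\bf \Sigma}^1_1$ set, and the outer quantifier $\a y$ yields ${\bf \Pi}^1_2$.

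Uniform convergence is handled by Proposition \ref{prop:complexity}(1). There, after restricting $\eps$ to rationals, all quantifiers range over countable sets. Moreover, $\big((x_n),x\big)\mapsto\norm{\bigvee_{n=k}^m|x_n-x|}$ is continuous for fixed $k,m$. So the set is a countable Boolean combination of open sets, hence Borel.

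The only step requiring an idea is the countable extraction. Naively one would pick a witness $z$ for each element $y$ of a countable dense set, but the relation $y\not\leqslant z$ is not preserved under approximation of $y$ in a useful direction. The remedy is to let the open sets $U_\mu$, indexed by the net, do the covering and then invoke the Lindel\"of property.
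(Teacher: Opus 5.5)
Your argument is correct. The paper itself does not prove the equivalence of order and $\sigma$-order convergence; it imports it from \cite{ARTT}. What is visible here is only the complexity bookkeeping: the ${\bf \Sigma}^1_2$ formula through $X_{\downarrow 0}$ displayed just before the proposition, and the two characterizations in Proposition~\ref{prop:complexity}. Your derivation of the ${\bf \Pi}^1_2$ bound from Proposition~\ref{prop:complexity}(2), and of Borelness from Proposition~\ref{prop:complexity}(1), is exactly that route.

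For the equivalence, your Lindel\"of extraction from the relatively open sets $\{y : y\not\leqslant z_\mu\}$ is a sound self-contained proof. It uses only $\inf_\mu z_\mu=0$, not monotonicity or directedness of the net. The same covering argument also works on the family of eventual dominators $\{z : \forall^\infty n\; |x_n-x|\leqslant z\}$. Run there, it shows directly that your ${\bf \Pi}^1_2$ formula is equivalent to $\sigma$-order convergence, so nets (and Proposition~\ref{prop:complexity}(2)) can be bypassed altogether.

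Two small points to tighten.
\begin{enumerate}
\item To get $\inf_m w_m=0$ you must also handle lower bounds $y$ that are not positive. If $y\leqslant w_m$ for all $m$, then $y^+\leqslant w_m$ as well, since $w_m\geqslant 0$. Your argument then forces $y^+=0$, i.e., $y\leqslant 0$.
\item The disjunct $y\leqslant 0$ is not the negation of $y>0$, since elements of mixed sign exist. Either write $\neg(y>0)$, which is Borel, or note that your formula is still equivalent. Any witness $z$ dominates some $|x_n-x|$ and so is positive, hence $y^+\not\leqslant z$ implies $y\not\leqslant z$.
\end{enumerate}
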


The next step is to identify large and familiar classes of Banach lattices in which the sets $X_{\downarrow 0}$ and $X^{\uparrow \infty}$ are Borel rather than merely ${\bf \Pi}^1_1$. For this, let us recall that a Banach lattice $X$ is {\em $\sigma$-complete} if every countable sequence $(x_n)$ that is bounded above has a least upper bound denoted  $\sup_nx_n$. Also, the lattice $X$ is said to be {\em $\sigma$-order continuous} provided that $\lim_n\norm{x_n}=0$ for all  $(x_n)\in X_{\downarrow 0}$. Finally, we say that $X$ has the {\em Fatou property} if, whenever
$$
0\leqslant x_1\leqslant x_2\leqslant \ldots\leqslant x=\sup_n x_n,
$$
then $\norm{x}=\sup_n\norm{x_n}$. It is immediate to see that $\sigma$-order continuity implies the Fatou property, since the former forces that actually $\lim_n\norm{x-x_n}=0$ when $0\leqslant x_1\leqslant x_2\leqslant \ldots\leqslant x=\sup_n x_n$.

Now, by \cite[Propositions 1.a.7 and 1.a.8]{LT2} the following three properties are equivalent in the case of separable Banach lattices $X$.
\begin{enumerate}
\item $X$ is $\sigma$-complete,
    \item $X$ is $\sigma$-order continuous,
    \item every order bounded increasing sequence is norm convergent.
\end{enumerate}

\begin{lemme}
    If $X$ is a $\sigma$-complete separable Banach lattice, the sets $X_{\downarrow 0}$ and $X^{\uparrow \infty}$ are both Borel and the relation $x_n\conv{\sigma\sf o}{n}x$ is ${\bf \Sigma}^1_1$.
\end{lemme}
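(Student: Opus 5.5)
We use the equivalence, recalled above from \cite{LT2}, between $\sigma$-completeness and $\sigma$-order continuity in a separable Banach lattice. The plan is to replace the order-theoretic conditions defining $X_{\downarrow 0}$ and $X^{\uparrow\infty}$ by norm conditions, which are Borel.

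\emph{The set $X_{\downarrow 0}$.} We claim that
$$
X_{\downarrow 0}=\Mgd{(x_n)\in X^\N}{x_1\geqslant x_2\geqslant\ldots\geqslant 0 \;\&\; \lim_n\norm{x_n}=0}.
$$
The inclusion $\subseteq$ is exactly $\sigma$-order continuity. For $\supseteq$, suppose $x_n\downarrow$, $x_n\geqslant 0$ and $\norm{x_n}\to 0$, and let $y$ be a lower bound of all the $x_n$. Then $y\leqslant x_n$ for every $n$, and since $x_n\to 0$ in norm and the order is norm closed, $y\leqslant 0$. Hence $\inf_n x_n=0$. The right-hand side is Borel: each condition $x_n\geqslant x_{n+1}$ is closed because the order is closed, and the condition $\lim_n\norm{x_n}=0$ is ${\bf \Pi}^0_3$. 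In fact this half does not use $\sigma$-completeness.

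\emph{The set $X^{\uparrow\infty}$.} Consider an increasing sequence $0\leqslant x_1\leqslant x_2\leqslant\ldots$. If it is order bounded, then by property (3) of the list recalled above it is norm convergent, hence norm Cauchy. Conversely, if it is norm convergent to some $x$, then closedness of the order gives $x_n\leqslant x$ for all $n$, so it is order bounded. Therefore $X^{\uparrow\infty}$ is the set of increasing positive sequences that are not norm Cauchy. This set is Borel, since being norm Cauchy is a ${\bf \Pi}^0_3$ condition.

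\emph{The relation $x_n\conv{\sigma\sf o}{n}x$.} Start from the formula recalled above:
$$
x_n\conv{\sigma\sf o}{n}x\;\equi\;\e (z_m)\in X_{\downarrow 0}\;\a m\;\a^\infty n\;|x_n-x|\leqslant z_m.
$$
The matrix is Borel in $\big((x_n),x,(z_m)\big)$, because $|\cdot|$ is continuous and $\leqslant$ is closed, so each condition $|x_n-x|\leqslant z_m$ is closed. The set $X_{\downarrow 0}$ is now Borel. The relation is therefore the projection along the Polish space $X^\N$ of a Borel set, and hence ${\bf \Sigma}^1_1$.

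The only step needing any care is the equivalence for $X^{\uparrow\infty}$, since it depends on the cited equivalence that order bounded increasing sequences converge in norm. The two remaining points, that the limit of an increasing norm-convergent sequence bounds it and that a norm-null decreasing sequence has infimum $0$, follow directly from closedness of the order.
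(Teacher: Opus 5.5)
Your proposal is correct and follows the paper's proof: $\sigma$-order continuity rewrites $X_{\downarrow 0}$ as the positive decreasing norm-null sequences, property (3) rewrites $X^{\uparrow\infty}$ as the positive increasing sequences that are not norm Cauchy, and the ${\bf \Sigma}^1_1$ bound comes from projecting a Borel set. Beyond the paper, you also check the reverse inclusions using norm-closedness of the order. In doing so you implicitly use completeness of $X$ to pass from Cauchy to convergent, which is harmless.
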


\begin{proof}
Observe that, by $\sigma$-order continuity, 
$$
X_{\downarrow 0}=\Mgd{(x_n)\in X^\N}{x_1\geqslant x_2\geqslant \ldots \geqslant 0\;\;\&\;\; \lim_n\norm{x_n}=0},
$$
whereas (3) above implies that
$$
X^{\uparrow \infty}=\Mgd{(x_n)\in X^\N}{0\leqslant x_1\leqslant x_2\leqslant \ldots \;\;\&\;\; \lim_{n,m\to \infty}\norm{x_n-x_m}\neq 0}.
$$
That $x_n\conv{\sigma\sf o}{n}x$ is ${\bf \Sigma}^1_1$ follows immediately from $X_{\downarrow 0}$ being Borel.
\end{proof}
As we shall see in Example \ref{example:fatou}, for $X_{\downarrow 0}$ to be Borel and $x_n\conv{\sigma\sf o}{n}x$ to be  ${\bf \Sigma}^1_1$, it is enough to assume that $X$ has the Fatou property. The same is true  if $X$ admits a countable {\em $\pi$-basis}  $\{b_n\}\subseteq X^+\setminus \{0\}$, meaning that, for every $x>0$, there in an $n$ so that $x>b_n$. Indeed, in this case,
$$
X_{\downarrow 0}=\Mgd{(x_n)\in X^\N}{x_1\geqslant x_2\geqslant \ldots \geqslant 0\; \;\&\;\; \a m\;\e n \; x_n\not> b_m  }
$$
is Borel.
Similarly, if $X$ admits a countable dominating family, then $X^{\uparrow \infty}$ is Borel.


\section{An ordinal index}
We now turn to other weaker conditions than the Fatou property or the existence of a countable $\pi$-basis. So,  in the following, let $X$ denote a fixed separable Banach lattice. Let us begin by recalling the following simple calculation.
\begin{lemme}
For all $x,y\in X$, we have 
$$
(y-x)^+=y-(x\wedge y).
$$
\end{lemme}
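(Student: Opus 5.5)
The identity $(y-x)^+=y-(x\wedge y)$ is pure vector-lattice algebra: it follows from translation invariance of the lattice operations and from $u\vee 0=-((-u)\wedge 0)$. No norm or completeness assumption enters. The plan is to rewrite $(y-x)^+=(y-x)\vee 0$ by adding and subtracting $y$, and then to convert the resulting supremum into an infimum by negation.

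First, recall that in any Riesz space translation commutes with the lattice operations, $(a\vee b)+c=(a+c)\vee(b+c)$. This holds because $u\mapsto u+c$ is an order automorphism: by the compatibility axiom $x\leqslant y\Rightarrow x+z\leqslant y+z$ quoted in the introduction it is order preserving, and so is its inverse $u\mapsto u-c$. Second, $u\mapsto -u$ is an order anti-automorphism, again by the compatibility axiom, so $-(a\vee b)=(-a)\wedge(-b)$.

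With these two facts the computation is
$$
(y-x)\vee 0 = \big((y-x)\vee 0 - y\big)+y=\big((-x)\vee(-y)\big)+y = -(x\wedge y)+y,
$$
which is the claimed identity. The middle step applies translation by $-y$ inside the supremum, and the last step uses the negation rule.

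There is no genuine obstacle here. The only point needing care is justifying the two elementary rules from the Banach lattice axioms stated in the introduction, which I will do in one line each as above rather than citing them.
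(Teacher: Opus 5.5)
Your proof is correct and is essentially the paper's argument: the paper computes $y-(x\wedge y)=y+\big((-x)\vee(-y)\big)=(y-x)\vee 0$ using the negation rule and the fact that translation distributes over $\vee$, which is your chain of equalities read in reverse. Your one-line derivations of these two rules from the compatibility axiom are a harmless addition to what the paper takes as standard.
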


\begin{proof}
Recall that $+$ distributes over the lattice operation $\vee$, so 
\maths{
y-(x\wedge y)
&=y+\big((-x)\vee (-y)\big) 
=(y-x)\vee 0
=(y-x)^+
}
for all $x,y\in X$.
\end{proof}

If $\Sigma$ is any set, we let $\Sigma^{<\N}$ denote the set of all finite strings of elements of $\Sigma$. Recall that a subset $T\subseteq \Sigma^{<\N}$ is a {\em tree} provided that $T$ contains the empty string $\tom$ and is closed under taking initial segments. Recall also that a tree $T$ is said to be {\em ill-founded} provided that it has an {\em infinite branch}, i.e., if there is an infinite sequence $(y_n)_{n=1}^\infty$ in $\Sigma$ so that
$$
(y_1,\ldots, y_n)\in T
$$
for all $n$. Otherwise, $T$ is {\em well-founded}. For a well-founded tree $T$, we define an ordinal valued rank function $\rho_T\colon T\to {\sf Ord}$ by letting 
$$
\rho_T(s)=0  \;\equi\; s \text{ has no proper extensions in }T
$$
and otherwise
$$
\rho_T(s)=\sup\big\{ \rho_T(t)+1\del t\in T\;\&\; s\subsetneq t\}.
$$
We also define the {\em rank} of $T$ itself by
$$
\rho(T)=\sup\big\{ \rho_T(t)+1\del t\in T\}=\rho_T(\emptyset)+1.
$$

A binary relation $\prec$ on a set $\Omega$ is said to be {\em well-founded} if there is no infinite sequence of elements $p_n\in \Omega$ so that
$$
\ldots\prec p_3\prec p_2\prec p_1.
$$
In this case, we may similarly define a rank $\rho_\prec\colon \Omega\to {\sf Ord}$ by
$$
\rho_\prec(p)=0 \;\equi\; p \text{ is minimal, i.e., }q\not \prec p \text{ for all  }q\in \Omega
$$
and 
$$
\rho_\prec(p)=\sup\big\{ \rho_\prec(q)+1\del q\prec p\}.
$$
As for trees, we set $\rho(\prec)=\sup\big\{ \rho_\prec(p)+1\del p\in \Omega\}$. For example, if $T$ is a well-founded tree, we may let $\prec$ be the relation $\supsetneq$ on $T$, that is, for $s,t\in T$, we have $s\prec t$ if $t$ is a proper initial segment of $s$. Then $\rho_T=\rho_\prec$. 

We let $Tr_\Sigma$ denote the set of all trees $T\subseteq \Sigma^{<\N}$ and $WF_\Sigma$ denote the subset of all well-founded trees. If $\Sigma$ is a countable set, then $Tr_\Sigma$ is a closed subset of the Polish space $\{0,1\}^{\Sigma^{<\N}}$, whereas 
 $WF_\Sigma$ is a coanalytic subset of $Tr_\Sigma$. Moreover, by \cite[Exercise 34.6]{Kechris}, $\rho$ is a coanalytic rank on  $WF_\Sigma$. In particular, this means that, for all $\lambda<\om_1$, the set
$$
WF_\Sigma^\lambda=\{T\in Tr_\Sigma\del \rho(T)\leqslant \lambda\}
$$
is Borel.

Set
$$
X_\downarrow=\big\{(z_n)_{n=1}^\infty\in X^\N\del z_1\geqslant z_2\geqslant \ldots\geqslant 0\big\}
$$
and define, for every sequence $(z_n)\in X_\downarrow$, a tree $ \Psi \big((z_n)\big)\subseteq \big(X^+\!\setminus \{0\}\big)^{<\N}$ by
\maths{
(y_1,\ldots, y_k)\in  \Psi &\big((z_n)\big) \equi\\
&\a i\; \a n\quad \norm{(y_i-z_n)^+}
\leqslant\tfrac{\norm{y_i}}{3^{i}}
\; \;\&\;\; 
\norm{y_{i+1}-y_i}\leqslant \tfrac{\norm{y_i}}{3^{i}}.
}

\begin{lemme}\label{WF}
Let $P\subseteq X^+\setminus \{0\}$ be a countable norm-dense subset. Then, for all $(z_n)_{n=1}^\infty\in X_\downarrow$, the following conditions are equivalent.
\begin{enumerate}
\item $\inf_n z_n=0$,
\item $\Psi \big((z_n)\big)$ is well-founded,
\item $\Psi \big((z_n)\big)\cap P^{<\N}$ is well-founded.
\end{enumerate}
\end{lemme}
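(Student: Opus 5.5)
The plan is to prove the cycle $(1)\Rightarrow(2)\Rightarrow(3)\Rightarrow(1)$. The middle implication is immediate: $\Psi\big((z_n)\big)\cap P^{<\N}$ is a subtree of $\Psi\big((z_n)\big)$, so an infinite branch of the former is one of the latter. The real content lies in two steps. An infinite branch of $\Psi\big((z_n)\big)$ should converge to a nonzero lower bound of $(z_n)$. Conversely, a nonzero lower bound can be approximated from $P$ closely enough to produce such a branch.

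For $(1)\Rightarrow(2)$, suppose $(y_i)$ is an infinite branch of $\Psi\big((z_n)\big)$.
\begin{itemize}
\item From $\norm{y_{i+1}-y_i}\leqslant \norm{y_i}/3^i$ we get $(1-3^{-i})\norm{y_i}\leqslant \norm{y_{i+1}}\leqslant (1+3^{-i})\norm{y_i}$. Hence the norms are bounded above by $\norm{y_1}\prod_i(1+3^{-i})<\infty$ and below by $\norm{y_1}\prod_i(1-3^{-i})>0$.
\item The increments are then summable, so $(y_i)$ is Cauchy. Its limit $y$ lies in $X^+$, because $X^+$ is closed, and $\norm{y}>0$, so $y>0$.
\item The map $u\mapsto (u-z_n)^+$ is norm-Lipschitz with constant $1$, since $|a^+-b^+|\leqslant |a-b|$. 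Moreover $\norm{(y_i-z_n)^+}\leqslant \norm{y_i}/3^i\to 0$. Therefore $(y-z_n)^+=0$, that is, $y\leqslant z_n$ for every $n$.
\end{itemize}
So $y>0$ is a lower bound of $(z_n)$, contradicting $\inf_n z_n=0$.

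For $(3)\Rightarrow(1)$, suppose $\inf_n z_n=0$ fails. Since $0$ is a lower bound, there is a lower bound $w$ of $(z_n)$ with $w\not\leqslant 0$. Put $y=w^+$. Then $y>0$, and $y=w\vee 0\leqslant z_n$ because both $w\leqslant z_n$ and $0\leqslant z_n$.
\begin{itemize}
\item Set $\delta_i=\norm{y}/(4\cdot 3^{i+1})$ and use density of $P$ to pick $y_i\in P$ with $\norm{y_i-y}\leqslant\delta_i$. Then $\norm{y_i}\geqslant \norm{y}-\delta_i\geqslant \norm y/2$.
\item By subadditivity of $(\cdot)^+$ we have $(y_i-z_n)^+\leqslant (y_i-y)^+ + (y-z_n)^+ = (y_i-y)^+$. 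This gives $\norm{(y_i-z_n)^+}\leqslant \delta_i\leqslant \norm{y}/(2\cdot 3^i)\leqslant \norm{y_i}/3^i$.
\item Also $\norm{y_{i+1}-y_i}\leqslant \delta_i+\delta_{i+1}\leqslant 2\delta_i\leqslant \norm{y}/(2\cdot3^{i})\leqslant\norm{y_i}/3^i$.
\end{itemize}
Thus every initial segment $(y_1,\ldots,y_k)$ lies in $\Psi\big((z_n)\big)\cap P^{<\N}$, and this tree is ill-founded.

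The main point needing care is in $(1)\Rightarrow(2)$: one must ensure the limit of the branch is nonzero. This is exactly why the tree's definition uses the relative bounds $\norm{y_i}/3^i$, which make the infinite product of norm ratios converge to a positive number. The remaining steps are routine lattice inequalities.
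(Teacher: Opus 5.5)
Your proof is correct and follows the paper's route. There, (2)$\Rightarrow$(3) is trivial, an infinite branch of $\Psi\big((z_n)\big)$ converges to a strictly positive lower bound of $(z_n)$, and a strictly positive lower bound is approximated from $P$ with geometrically shrinking errors (the paper uses $\norm{y}/7^i$ where you use $\norm{y}/(4\cdot 3^{i+1})$). You also supply two details the paper leaves implicit: why the limit of the branch is nonzero (your product bound on the norm ratios), and how to pass from an arbitrary lower bound $w\not\leqslant 0$ to a positive one via $w^+$.
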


\begin{proof}Note first that the implication (2)$\saa$(3) is trivial. 

(3)$\saa$(1): Assume that $0$ is not the infimum of the sequence $(z_n)$, that is, that there is some  $y\in X^+$ satisfying
$$
 z_1\geqslant z_2\geqslant \ldots\geqslant y> 0.
 $$
Pick then elements $y_{i}\in  P$ satisfying $\norm{y_i-y}<\tfrac{\norm{y}}{7^{i}}$, which implies that
$$
\norm{y_{i+1}-y_i}\leqslant \norm{y_{i+1}-y}+\norm{y-y_i}<\frac{\norm{y}}{7^{i+1}}+\frac{\norm{y}}{7^{i}}<\frac{\norm{y_i}}{3^{i}}
$$
for all $i$. Furthermore,
$$
\norm{(y_i-z_n)^+}\leqslant \norm{(y-z_n)^+}+\norm{y-y_i}=\norm{y-y_i}<\frac{\norm{y_i}}{3^{i}},
$$
showing that $(y_1,y_2,\ldots)$ is an infinite branch of $\Psi\big((z_n)\big)\cap P^{<\N}$.

(1)$\saa$(2): 
Suppose that $\Psi \big((z_n)\big)$ is ill-founded, i.e., that $\Psi \big((z_n)\big)$ has an infinite branch $(y_1,y_2,\ldots)$. Then the conditions $ \norm{y_{i+1}-y_i}\leqslant\frac{\norm{y_i}}{3^{i}}$ ensure that $(y_i)$ is a Cauchy sequence converging to some $y>0$. Furthermore, as $\norm{(y_i-z_n)^+}\leqslant \frac{\norm{y_i}}{3^{i}}$ for all $n$ and $i$, we find that also
$$
\Norm{y-(z_n\wedge y)}=\Norm{(y-z_n)^+}=0,
$$
i.e., that $y\leqslant z_n$ for all $n$. So, $y$ is a strictly positive lower bound for $(z_n)$.
\end{proof}

\begin{lemme}\label{lemma:BONS} 
The  following conditions are equivalent.
\begin{enumerate}
\item The set 
$$
X_{\downarrow0}=\big\{(z_n)_{n=1}^\infty\in X_{\downarrow}\del \inf_nz_n=0\big\}
$$
is Borel,
\item 
$$
\sup\Big\{\rho\Big(\Psi \big((z_n)\big)\Big)\Del {(z_n)_{n=1}^\infty\in X_{\downarrow0}}\Big\}<\om_1.
$$
\end{enumerate}
\end{lemme}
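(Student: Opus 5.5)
The plan is the standard boundedness argument for coanalytic ranks. The key reduction is to work with the countable alphabet $P$ instead of $X^+\setminus\{0\}$. Fix a countable norm-dense set $P\subseteq X^+\setminus\{0\}$ and define $\Phi\colon X_\downarrow\to Tr_P$ by $\Phi((z_n))=\Psi((z_n))\cap P^{<\N}$.

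The first step is to check that $\Phi$ is Borel. For a fixed string $s=(y_1,\ldots,y_k)\in P^{<\N}$, the condition $s\in\Phi((z_n))$ has two parts. The conditions $\norm{y_{i+1}-y_i}\leqslant \norm{y_i}/3^i$ do not depend on $(z_n)$. The conditions $\a n\;\norm{(y_i-z_n)^+}\leqslant\norm{y_i}/3^i$ are a countable intersection of closed conditions in $(z_n)$. Hence $\{(z_n)\del s\in \Phi((z_n))\}$ is closed, and $\Phi$ is Borel; in fact $\Phi^{-1}$ of each basic clopen set is Borel. By Lemma \ref{WF}, $X_{\downarrow0}=\Phi^{-1}(WF_P)$.

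The second step is to compare the ranks of $\Psi$ and $\Phi$. I claim that $\rho(\Psi((z_n)))=\rho(\Phi((z_n)))$ whenever $(z_n)\in X_{\downarrow 0}$. One inequality is trivial. For the other, I would show by induction on rank that every node $s$ of $\Psi$ can be approximated by a node of $\Phi$ of at least the same rank. This is the step I expect to be the main obstacle, because the defining inequalities are non-strict and perturbing $s$ may break them. If it fails, I will fall back on the fact that both conditions (1) and (2) can be rephrased through $\Phi$ alone: condition (2) is then to be read through $\Phi$, or the inequalities will be made strict. The plan does not depend on this equality if I instead show directly that, for each $\lambda<\om_1$, the set $\{(z_n)\in X_{\downarrow0}\del \rho(\Psi((z_n)))\leqslant\lambda\}$ is $\mathbf{\Sigma}^1_1$ and contains $\{(z_n)\del\rho(\Phi((z_n)))\leqslant\lambda\}$. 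Either way, the goal of this step is that the two ranks are bounded below $\om_1$ simultaneously on $X_{\downarrow0}$.

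The last step proves the two implications. For (2)$\Rightarrow$(1), suppose the ranks are bounded by some $\lambda<\om_1$. Then $X_{\downarrow0}=\Phi^{-1}(WF_P^\lambda)$, which is Borel because $WF_P^\lambda$ is Borel and $\Phi$ is Borel. For (1)$\Rightarrow$(2), suppose $X_{\downarrow0}$ is Borel. Then $\Phi[X_{\downarrow0}]$ is an analytic subset of $WF_P$. By the boundedness theorem for the coanalytic rank $\rho$ on $WF_P$ (\cite[Exercise 34.6]{Kechris} together with Theorem 35.23 there), it has bounded rank below $\om_1$. Combined with the rank comparison, this gives (2).
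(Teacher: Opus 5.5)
Your Step 1 and your proof of (2)$\Rightarrow$(1) are correct and match the paper's argument. On $X_{\downarrow0}$ only the trivial inequality $\rho(\Phi((z_n)))\leqslant\rho(\Psi((z_n)))$ is used, and Lemma \ref{WF} handles sequences with nonzero infimum.

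The gap is in (1)$\Rightarrow$(2). The boundedness theorem applied to the analytic set $\Phi[X_{\downarrow0}]\subseteq WF_P$ bounds only the ranks of the countable subtrees $\Psi((z_n))\cap P^{<\N}$. Condition (2), however, concerns the full trees $\Psi((z_n))$ over the uncountable alphabet $X^+\setminus\{0\}$. Transferring the bound needs the nontrivial half of your rank comparison, which you do not prove.

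In fact the claimed equality is false, for exactly the reason you feared. Take the paper's space $X_1$, i.e.\ $c$ with $\norm{t}_1=\max\{\frac13\norm{t}_\infty,|\lim_j t_j|\}$, and let $z_n=(0,\ldots,0,1,1,\ldots)$ with $n$ zeros. Since $0\leqslant(p-z_n)^+\leqslant p$, with equality in the first $n$ coordinates, one gets $\sup_n\norm{(p-z_n)^+}_1=\max\{\frac13\norm{p}_\infty,(\lim_j p_j-1)^+\}$. Hence $(p)\in\Psi((z_n))$ if and only if $\sup_j p_j=\lim_j p_j\leqslant\frac32$. This set together with $0$ is closed with empty interior in $X^+$. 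So a countable norm-dense $P\subseteq X^+\setminus\{0\}$ can avoid it, and then $\Psi((z_n))\cap P^{<\N}=\{\emptyset\}$ has rank $1$ while $\rho(\Psi((z_n)))\geqslant 2$.

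The same example with $\lambda=1$ refutes your fallback inclusion $\{\rho(\Phi)\leqslant\lambda\}\subseteq\{\rho(\Psi)\leqslant\lambda\}$. Your other fallbacks (reading (2) through $\Phi$, or making the inequalities strict) change the statement. The $\Phi$-version of (2) is a priori weaker, so proving it does not give (1)$\Rightarrow$(2).

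The missing idea is that this direction needs no countable alphabet at all. The paper applies the boundedness theorem for analytic well-founded relations (\cite[Theorem 31.1]{Kechris}) directly. If $X_{\downarrow0}$ is Borel, then $\Omega=\{((z_n),s)\in X_{\downarrow0}\times(X^+\setminus\{0\})^{<\N} : s\in\Psi((z_n))\}$ is a Borel subset of a Polish space, because the conditions defining $\Psi$ are closed in the pair $((z_n),s)$. Setting $((z_n),s)\prec((z_n),t)$ when $s$ properly extends $t$ gives a Borel relation on $\Omega$, which is well-founded by Lemma \ref{WF}. Hence $\rho(\prec)<\om_1$. Moreover $\rho(\Psi((z_n)))=\rho_\prec((z_n),\emptyset)+1\leqslant\rho(\prec)$ for every $(z_n)\in X_{\downarrow0}$, which is exactly (2), with no rank comparison needed.
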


\begin{proof}
(1)$\saa$(2): Note that, if $X_{\downarrow0}$ is Borel, so is the set
$$
\Omega
=\MGd{
\big((z_n)_{n=1}^\infty,(y_1,\ldots,y_m)\big)\in X_{\downarrow0}\times \big(X^+\!\setminus \{0\}\big)^{<\N} 
}
{
(y_1,\ldots,y_m)\in \Psi\big((z_n)\big)
}.
$$
We may then define a Borel quasiordering $\prec$ on $\Omega$ by setting
\maths{
\big((z_n)_{n=1}^\infty,&(y_1,\ldots,y_m)\big)\prec \big((u_n)_{n=1}^\infty,(v_1,\ldots,v_k)\big)\\
&\equi\; (z_n)_{n=1}^\infty=(u_n)_{n=1}^\infty 
\quad\&\quad
k<m
\quad\&\quad 
(v_1,\ldots,v_k)=(y_1,\ldots,y_k)
}
and observe that $\prec$ is well-founded by Lemma \ref{WF}. It then follows from the boundedness theorem for analytic well-founded relations \cite[Theorem 31.1]{Kechris} that
\maths{
\sup\Big\{\rho\Big(\Psi \big((z_n)\big)\Big)\Del {(z_n)_{n=1}^\infty\in X_{\downarrow0}}\Big\}
&=\sup\Big\{\rho_{\Psi \big((z_n)\big)}(\tom)+1\Del {(z_n)_{n=1}^\infty\in X_{\downarrow0}}\Big\} \\
&=\sup\Big\{\rho_\prec\big((z_n),\emptyset\big)+1\Del {(z_n)_{n=1}^\infty\in X_{\downarrow0}}\Big\} \\
&=\rho(\prec)\\
&<\om_1.
}

(2)$\saa$(1):  Observe that, if $P\subseteq X^+\setminus \{0\}$ is  a fixed countable norm-dense subset and 
$$
\sup\Big\{\rho\Big(\Psi \big((z_n)\big)\Big)\Del {(z_n)_{n=1}^\infty\in X_{\downarrow0}}\Big\}<\om_1,
$$
then also 
$$
\lambda=\sup\Big\{\rho\Big(\Psi \big((z_n)\big)\cap P^{<\N}\Big)\Del {(z_n)_{n=1}^\infty\in X_{\downarrow0}}\Big\}<\om_1.
$$
Note now that the map $X_\downarrow\maps \Theta  Tr_P$ defined by 
$$
\Theta \big((z_n)\big)= \Psi \big((z_n)\big)\cap P^{<\N}
$$
is Borel measurable and satisfies 
$$
(z_n)\in X_{\downarrow0}\quad\equi\quad \Theta \big((z_n)\big)\in WF^\lambda_P.
$$
Because $WF^\lambda_P$ is Borel, this shows that also $X_{\downarrow0}$ is Borel.
\end{proof}

For every ordinal $\alpha$ and $(z_n)_{n=1}^\infty\in X_\downarrow$, we define a game $G_\alpha\big[(z_n)\big]$ between two players I and II as follows. Players I and II alternate in playing ordinals  $\beta_i$  and  vectors $y_i\in X^+\!\setminus \{0\}$, 
\finitegames 
{\beta_1}{y_1}{\beta_2}{y_2}{\beta_{k-1}}{y_{k-1}}{\beta_k}{y_k}
and where the ordinals played are subject to the condition
$$
\alpha>\beta_1>\beta_2>\ldots>\beta_{k-1}>\beta_k\geqslant 0.
$$
The game ends when I  plays  $\beta_k=0$ and II plays its response  $y_k$. This will eventually happen as the ordinals are well-ordered. Player II is then said to {\em win} a run of the game provided that 
$$
\norm{y_{i+1}-y_i} \leqslant \frac{\norm{y_i}}{3^i}  
$$
for all $1\leqslant i<k$ and 
$$
\norm{(y_i-z_n)^+}\leqslant \frac{\norm{y_i}}{3^i}
$$
for all $n\geqslant 1$ and $1\leqslant i\leqslant k$. Otherwise player I wins.

\begin{exa}[Banach lattices with the Fatou property]\label{example:fatou}
Suppose $X$ is a Banach lattice with  the Fatou property, that is, whenever we have elements $0\leqslant x_1\leqslant x_2\leqslant \ldots \leqslant x$ with $x=\sup_nx_n$, then $\norm x=\sup_n\norm {x_n}$. Assume also that $(z_n)\in X_{\downarrow0}$. Then it is easy to see that I has a winning strategy in the game $G_1\big[(z_n)\big]$. Indeed, I simply plays $\beta_1=0$, to which II responds with some vector $y_1$. If II wins this run of the game, we must have 
$$
\norm{y_1-(z_n\wedge y_1)}=\norm{(y_1-z_n)^+}\leqslant \frac{\norm{y_1}}{3}
$$
for all $n$, while at the same time
$$
0\leqslant y_1-(z_1\wedge y_1)\leqslant y_1-(z_2\wedge y_1)\leqslant y_1-(z_3\wedge y_1)\leqslant \ldots \leqslant y_1= \sup_n\big(y_1-(z_n\wedge y_1)\big).
$$
Taken together, this contradicts the Fatou property. This means that a winning strategy for I in $G_1\big[(z_n)\big]$ is simply to play $\beta_1=0$.
\end{exa}

\begin{lemme}
For every $(z_n)_{n=1}^\infty\in X_\downarrow$ and every ordinal $\alpha$, 
\maths{
\rho\Big(\Psi \big((z_n)\big)\Big)\leqslant{\alpha}\quad\equi\quad 
\text{ I has a winning strategy in the game } G_\alpha\big[(z_n)\big].
}
\end{lemme}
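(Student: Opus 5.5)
The plan is to treat $G_\alpha\big[(z_n)\big]$ as the standard rank-comparison game on the tree $T=\Psi\big((z_n)\big)$. First observe that the winning condition for II in a run ending with $(y_1,\ldots,y_k)$ is \emph{exactly} the membership condition $(y_1,\ldots,y_k)\in T$. Both require, for all relevant $i$ and all $n$,
$$
\norm{(y_i-z_n)^+}\leqslant \tfrac{\norm{y_i}}{3^i}
\quad\text{and}\quad
\norm{y_{i+1}-y_i}\leqslant \tfrac{\norm{y_i}}{3^i}.
$$
Since $T$ is a tree, once some partial play $(y_1,\ldots,y_j)$ leaves $T$, II has already lost. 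So II wins precisely when every partial play stays inside $T$. We read $\rho(T)\leqslant\alpha$ as false when $T$ is ill-founded. For $\alpha=0$, I has no legal first move, while $\tom\in T$ gives $\rho(T)\geqslant 1$. So we adopt the natural convention that I does not win, and both sides fail. Recall also that $\rho(T)\leqslant\alpha$ means $\rho_T(\tom)<\alpha$.

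($\Leftarrow$) Suppose $T$ is well-founded with $\rho_T(\tom)<\alpha$. Let I always play $\beta_{j+1}=\rho_T\big((y_1,\ldots,y_j)\big)$, the rank of the current node, as long as that node lies in $T$. If the node has left $T$, II has already lost and I may finish with $0$. Whenever II's reply $y_{j+1}$ keeps us in $T$, we have $\rho_T\big((y_1,\ldots,y_{j+1})\big)<\rho_T\big((y_1,\ldots,y_j)\big)=\beta_{j+1}$, so I's next move is legal. The sequence is strictly decreasing and starts below $\alpha$. When I plays $\beta_k=0$, the current node $s$ has rank $0$, so it has no proper extension in $T$. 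Hence II's final move $y_k$ produces $s^\frown y_k\notin T$, and II loses.

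($\Rightarrow$) We prove the contrapositive: if $T$ is ill-founded or $\rho_T(\tom)\geqslant\alpha$, then II can defeat any strategy of I. Call a node $s\in T$ \emph{good for $\beta$} if either $s$ lies on an infinite branch of $T$ or $\rho_T(s)\geqslant\beta$. II maintains the invariant that, after I plays $\beta_{j+1}$, the current node $s=(y_1,\ldots,y_j)$ satisfies either that it lies on an infinite branch or that $\rho_T(s)>\beta_{j+1}$. Initially this holds, since $\beta_1<\alpha\leqslant\rho_T(\tom)$ or $T$ is ill-founded. Given the invariant, II chooses $y_{j+1}$ so that $s^\frown y_{j+1}$ is on an infinite branch, or has rank $\geqslant\beta_{j+1}$.

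The only point needing care is this choice in the well-founded case. From $\rho_T(s)>\beta_{j+1}$ and the definition of $\rho_T$ as a supremum over all proper extensions, we get some proper extension $t$ with $\rho_T(t)\geqslant\beta_{j+1}$. Since $\rho_T$ is monotone along initial segments, the immediate extension of $s$ below $t$ also has rank $\geqslant\beta_{j+1}$. Because I's next ordinal satisfies $\beta_{j+2}<\beta_{j+1}$, the invariant propagates. Thus every partial play stays in $T$, and II wins the run, so I has no winning strategy. This establishes the equivalence.
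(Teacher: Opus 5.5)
Your proof is correct, and it takes a different route from the paper. The paper asserts, as a ``straightforward verification'', that $\rho\big(\Psi((z_n))\big)>\alpha$ holds if and only if II has a winning strategy in $G_\alpha\big[(z_n)\big]$. It then invokes determinacy of these games, citing Kechris, to turn ``II has no winning strategy'' into ``I has a winning strategy''.

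You instead give explicit strategies for both players. For I, you play the rank of the current node. This is legal because ranks strictly decrease along $T$, and it forces II out of $T$ once a node of rank $0$ is reached. For II, you stay inside $T$ by following an infinite branch, or by choosing an immediate successor whose rank is at least I's last ordinal.

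Your strategy for II is one half of the verification the paper leaves out. Your strategy for I supplies the other half and removes the need for the determinacy theorem. So your argument is self-contained, and determinacy of $G_\alpha\big[(z_n)\big]$ comes out as a by-product. This is the usual rank argument for games in which every run is finite, and it stays elementary even though II's moves range over the uncountable set $X^+\setminus\{0\}$.

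The paper's version is shorter on the page but outsources both the verification and the determinacy. Yours also states conventions the paper leaves implicit: $\rho(T)\leqslant\alpha$ is read as false for ill-founded $T$, and for $\alpha=0$ player I has no legal move and therefore does not win.
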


\begin{proof}
A straightforward verification shows that 
\maths{
\rho\Big(\Psi \big((z_n)\big)\Big)>{\alpha}\quad\equi\quad 
\text{ II has a winning strategy in the game } G_\alpha\big[(z_n)\big].
}
Note also that, because the rules and winning conditions in both games are Borel, the games are determined, that is, either player I or II has a winning strategy \cite[Theorem 20.6]{Kechris}. Therefore,
\maths{
\rho\Big(\Psi \big((z_n)\big)\Big)\leqslant{\alpha}
&\quad\equi\quad 
\text{ II has no winning strategy in the game } G_\alpha\big[(z_n)\big]\\
&\quad\equi\quad 
\text{ I has a winning strategy in the game } G_\alpha\big[(z_n)\big]
}
as claimed.
\end{proof}

\begin{defi}\label{defi:alphaFatou}
The separable Banach lattice $X$ is said to be {\em $\alpha$-Fatou} provided that, for every $(z_n)\in X_\downarrow$, 
\maths{
\inf_n z_n=0
&\quad\equi\quad\text{ I has a winning strategy in the game } G_\alpha\big[(z_n)\big]\\
&\quad\equi\quad\rho\Big(\Psi \big((z_n)\big)\Big)\leqslant{\alpha}.\
}
\end{defi}

By Example \ref{example:fatou}, Banach lattices with the Fatou property are $1$-Fatou. 

\begin{proof}[Proof of Theorem \ref{thm:Fatou}]
(1)$\saa$(2): Suppose that (1) holds, i.e., that $X_{\downarrow0}$ is Borel. Then, for all sequences $(x_n)$ and vectors $x$, we have
\maths{
x_n\conv{\sigma\sf o}{n}x
&\quad\equi\quad \e (z_n)\in X_{\downarrow0}\; \;\a m\;\; \a^\infty n\;\; |x_n-x|\leqslant z_m,
}
which is clearly ${\bf \Sigma}^1_1$.

(2)$\saa$(1): Observe that, for a sequence $x_1\geqslant x_2\geqslant \ldots\geqslant 0$, we have 
$$
\a y>0\; \e n \;\;y\not\leqslant x_n\quad\equi\quad \inf_nx_n=0 \quad\equi\quad x_n\conv{\sigma\sf o}{n}0.
$$
The first expression is clearly ${\bf \Pi}^1_1$. Hence, if the last is ${\bf \Sigma}^1_1$, then these equivalent expressions are all Borel and hence  $X_{\downarrow0}$ is a Borel set.

(1)$\equi$(3):
Just observe that, by Lemma \ref{lemma:BONS}, the set $X_{\downarrow0}$ is Borel if and only if there is some $\alpha<\om_1$ so that
$$
\rho\Big(\Psi \big((z_n)\big)\Big)\leqslant{\alpha}
$$
for all $(z_n)\in X_{\downarrow0}$.
\end{proof}


\section{Examples of spaces with higher order Fatou properties}\label{sec:Fatou-ex}
Our next task is to show that the hierarchy of $\alpha$-Fatou properties does not collapse. That is, we will construct spaces that are $\alpha$-Fatou, but only for larger and larger $\alpha<\om_1$.

\begin{thm} 
For every countable ordinal $\alpha$, there is a separable Banach lattice with a countable $\pi$-basis that fails to be $\alpha$-Fatou.
\end{thm}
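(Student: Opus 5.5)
The plan is to realize the ordinal $\alpha$ as a Cantor--Bendixson rank and to destroy the Fatou property "$\alpha$ times in a nested way" by an equivalent lattice renorming of a $C(K)$-space. Let $K$ be a countable compact space of Cantor--Bendixson rank $>\alpha$, for example the ordinal interval $[0,\om^{\alpha}\cdot 2]$ with the order topology. Then $C(K)$ is separable, and the indicators $\mathbf 1_{\{t\}}$ of the isolated points $t$ form a countable $\pi$-basis, because isolated points are dense in a countable compact space. On $C(K)$ I would put a lattice norm of the form
$$
\norm f=\sup_{t\in K} w(t)\,|f(t)|,
$$
where $w\colon K\to[1,M]$ is an upper semicontinuous weight depending on the CB-rank of $t$. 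Such a norm is monotone in $|f|$ and equivalent to $\norm\cdot_\infty$, so $X=(C(K),\norm\cdot)$ is a separable Banach lattice. The $\pi$-basis is a purely order-theoretic notion, so it is unaffected by the renorming. The weights are chosen so that points of higher rank carry much larger weight than all nearby points of lower rank, with a ratio of at least $3^{i}$ at the $i$-th level of descent. Since $\alpha$ may be infinite while any single run of the game is finite, the ratio should depend on the distance in the descent, not on the absolute rank. I would define $w$ by transfinite recursion along the CB-derivatives.

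The decreasing sequence will be $z_n=\mathbf 1$ on a shrinking clopen neighbourhood $U_n$ of a top-rank point $p$, with $\bigcap_nU_n=\{p\}$. Since $p$ is not isolated, $\inf_nz_n=0$ in $C(K)$, so $(z_n)\in X_{\downarrow0}$. By the lemma preceding Definition \ref{defi:alphaFatou}, together with that definition, it suffices to show that II has a winning strategy in $G_\alpha[(z_n)]$, i.e.\ $\rho(\Psi((z_n)))>\alpha$. The only failure of the Fatou property comes from the heavy weight sitting at points that are not isolated, as in Example \ref{example:fatou} but reversed.

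The strategy for II goes as follows. At stage $i$, II keeps a point $t_i$ of CB-rank $\geqslant\beta_i$ with $t_i\in\bigcap_nU_n$ or at least on a suitable "spine", together with a continuous bump $y_i\geqslant0$ with $y_i(t_i)=1$. The bump is chosen so that $\norm{y_i}\approx w(t_i)$, while the region where $y_i>z_n$ only contains points of much smaller weight. This yields $\norm{(y_i-z_n)^+}\leqslant\norm{y_i}/3^i$. When I then plays $\beta_{i+1}<\beta_i$, II modifies $y_i$ only near points of rank about $\beta_{i+1}$ close to $t_i$, where the weight is small relative to $w(t_i)$. This keeps $\norm{y_{i+1}-y_i}\leqslant\norm{y_i}/3^i$.

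The main obstacle is the bookkeeping. The relevant norm of $y_i$ must stay concentrated at heavy points that are still "above" the infimum for all $n$, and at the same time every descent step must cost only a $3^{-i}$ fraction. Since every finite run terminates, no genuine lower bound of $(z_n)$ is ever produced. I would first verify the construction for $\alpha=1,2$ with the ordinal spaces $\om+1$ and $\om^2+1$, then set up the weights and the strategy by transfinite induction on $\alpha$, taking sups over cofinal sequences at limit ordinals.
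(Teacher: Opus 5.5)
\textbf{Verdict: there is a genuine gap, and it is fatal for every infinite $\alpha$.} No equivalent lattice renorming of $C(K)$, for any $K$, can fail to be $\omega$-Fatou.

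The sup norm on $C(K)$ has the Fatou property. Indeed, if $0\leqslant x_n\uparrow x=\sup_nx_n$ but $c=\sup_n\norm{x_n}_\infty<\norm{x}_\infty$, then $x\wedge c\mathbf 1$ is a strictly smaller upper bound.

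Now take any $(z_n)\in X_{\downarrow0}$ and $y>0$. Since $\inf_n(y\wedge z_n)=0$, the elements $(y-z_n)^+=y-(y\wedge z_n)$ increase with supremum $y$, so $\sup_n\norm{(y-z_n)^+}_\infty=\norm{y}_\infty$. Your weights lie in $[1,M]$, so $\norm\cdot_\infty\leqslant\norm\cdot\leqslant M\norm\cdot_\infty$, and therefore $\sup_n\norm{(y-z_n)^+}\geqslant\norm{y}/M$.

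Consequently, if $(y_1,\ldots,y_k)\in\Psi((z_n))$, the condition on the last entry gives $\norm{y_k}/M\leqslant\norm{y_k}/3^k$, i.e.\ $3^k\leqslant M$. So every node of $\Psi((z_n))$ has length at most $\log_3M$, and $\rho(\Psi((z_n)))\leqslant\lfloor\log_3M\rfloor+1$ for all $(z_n)\in X_{\downarrow0}$. Your space is therefore $N$-Fatou for a finite $N$ depending only on $M$. This holds regardless of the Cantor--Bendixson rank of $K$, the shape of $w$, the choice of $(z_n)$, or II's strategy.

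Since $\rho(\Psi)>\alpha\geqslant\omega$ requires nodes of every finite length (player I can play $\beta_1=\omega$ and then any finite $\beta_2$), better bookkeeping cannot repair this. In your concrete setting the failure is visible directly. The requirement that ``the region where $y_i>z_n$ only contains points of much smaller weight'' forces $\limsup_{t\to p,\,t\neq p}w(t)\leqslant w(p)/3^i$ for every $i$ up to the length of the run, and that length must be unbounded.

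For finite $\alpha$ your idea does work, but the Cantor--Bendixson rank plays no role. The space $c=C(\omega+1)$ with weight $1$ at $\omega$ and weight $3^{-\alpha}$ at the isolated points already suffices. This is essentially what the paper's construction produces for finite $\alpha$, with weight $\tfrac13 7^{1-\alpha}$.

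\textbf{What is missing.} The failure of the Fatou property has to compound without bound inside a single space. So the final lattice cannot carry a norm equivalent to a Fatou norm, and in particular cannot be $C(K)$ with an equivalent norm. The paper achieves this by transfinite induction on the spaces themselves rather than on $K$.

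At successor steps it renorms by $\norm{x}_{\alpha+1}=\max\{\tfrac17\norm{x}_\alpha,|\phi_\alpha(x)|\}$. This shrinks by a factor $7$ everything in $\ker\phi_\alpha$, in particular all $(y-z_n)^+$ and $y_{i+1}-y_i$ with entries in $S_\alpha$, while preserving norms on $S_\alpha$. Hence $z^\alpha_1$ can be prepended to every node of $T_\alpha$.

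At limit steps it passes to a separable sublattice of $\bigoplus_{\ell_\infty}X_{\alpha_n}$, consisting of sequences along which $\phi_{\alpha_n}(x^n)$ converges. For $\alpha=\omega$ this is a weighted sup norm over $\bigsqcup_n(\omega+1)$ whose weights at isolated points tend to $0$. That is close to your picture, but it has exactly the unbounded weight ratio that an equivalent renorming of $C(K)$ forbids. The paper must then choose the separable sublattice carefully so that it still contains the tree, the vectors $z^\alpha_k$, and a countable $\pi$-basis.
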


\begin{proof}
Our proof goes by induction on $1\leqslant \alpha<\om_1$. For each $\alpha$, we will construct several objects,
\begin{enumerate}
    \item a separable Banach lattice $(X_\alpha,\norm{\cdot}_\alpha)$,
    \item a countable $\pi$-basis $B_\alpha$ for $X_\alpha$,
    \item a Banach lattice homomorphism $X_\alpha\maps{\phi_\alpha} \mathbb{R}$ of norm $1$,
    \item a sequence 
$$
z^\alpha_1\geqslant z^\alpha_2\geqslant  \ldots> 0=\inf_nz^\alpha_n
$$
whose terms belong to the set 
$$
S_\alpha=\big\{x\in X^+_\alpha\del \norm{x}_\alpha=\phi_\alpha(x)=1\big\},
$$
\item\label{last property} a countable tree $T_\alpha\subseteq \Psi\big((z^\alpha_n)\big)\cap S_\alpha^{<\N}$ so that
$$
\rho(T_\alpha)>\alpha.
$$
\end{enumerate}
Observe then that, by property \eqref{last property}, we have
$$
\rho\Big(\Psi\big((z^\alpha_n)\big)\Big)>{\alpha}
$$
and hence that $X_\alpha$ fails to be $\alpha$-Fatou.

\

{\bf Base case, $\alpha=1$.} We let $X_1$ be the Banach lattice  $c$ of convergent real sequences endowed with the equivalent renorming 
$$
\Norm{(t_1,t_2,\ldots)}_1 =\max\Big\{ \tfrac{1}{3}\Norm{(t_1,t_2,\ldots)}_{\ell_\infty},  \big|\lim_nt_n\big|\Big\}.
$$ 
Note that 
$$
B_1=\Mgd{(0,\ldots, 0,t,0,\ldots)}{t\in \Q_+}
$$
is a countable $\pi$-basis for $X_\alpha$.
Define $\phi_1$ by
$$
\phi_1\big((t_1,t_2,\ldots)\big) = \lim_n t_n
$$
and, for each $n$, set 
$$
z^1_n = (\underbrace{0,0,\ldots,0}_{n\text{ times}}, 1,1,1,\ldots) \in S_1.
$$ 
Finally, letting $y_1 = (1,1,1,\ldots)\in S_1$, we see that $(y_1)\in \Psi\big((z^\alpha_n)\big)$ and therefore that
$$
T_1=\big\{\tom, (y_1)\big\}\subseteq \Psi\big((z^\alpha_n)\big)\cap S_1^{<\N}
$$
and that $\rho(T_1)=2>1$.

\

{\bf Successor case.} Suppose that $(X_\alpha,\norm{\cdot}_\alpha)$, $B_\alpha$, $\phi_\alpha$, $(z_n^\alpha)$ and $T_\alpha$ have been defined as above. We then let $X_{\alpha+1}=X_\alpha$ with the new norm 
$$
\norm x_{\alpha+1} = \max\Big\{\tfrac17{\norm x_\alpha},\big|\phi_\alpha(x)\big|\Big\}.
$$ 
Observe that $\norm\cdot_{\alpha+1}\leqslant \norm\cdot_\alpha$, whereby $\norm{\phi_\alpha}_{\alpha+1}=1$ and we may therefore set 
 $\phi_{\alpha+1} =\phi_\alpha$ and $B_{\alpha+1}=B_\alpha$.
 Note also that
$$
\norm x_{\alpha+1}=\norm x_\alpha \;\equi\;  \norm x_\alpha=|\phi_\alpha(x)|,
$$
from which it follows that $S_{\alpha}\subseteq S_{\alpha+1}$. We may thus set $z^{\alpha+1}_n = z^\alpha_n\in S_\alpha\subseteq S_{\alpha+1}$ for all $n$.
Finally,  let 
$$
T_{\alpha+1}
=\big\{ (z_1^\alpha,y_1,\ldots, y_n)\del (y_1,\ldots, y_n)\in T_\alpha\big\}\cup\{\tom\} \;\;\subseteq \;\; S_{\alpha}^{<\N} \;\;\subseteq \;\; S_{\alpha+1}^{<\N}
$$
and note that $\rho(T_{\alpha+1})>\rho(T_{\alpha})>\alpha$.
To see that $T_{\alpha+1}\subseteq  \Psi\big((z^{\alpha+1}_n)\big)$, note first that, for all $x,y\in S_{\alpha}$, we have 
$$
\phi_\alpha\big((x-y)^+\big)
=\big(\phi_\alpha(x-y)\big)^+
=0^+
=0
=\phi_\alpha(x-y),
$$
whereby $\Norm{(x-y)^+}_{\alpha+1}=\tfrac17\Norm{(x-y)^+}_{\alpha}$ and $\norm{x-y}_{\alpha+1}=\tfrac17\norm{x-y}_{\alpha}$.
Thus, for all $n$, 
\maths{
\Norm{(z_1^{\alpha} - z^{\alpha+1}_n)^+}_{\alpha+1} 
&=\Norm{(z_1^{\alpha} - z^{\alpha}_n)^+}_{\alpha+1} \\
&=\tfrac{1}{7}\Norm{(z_1^{\alpha} - z^{\alpha}_n)^+}_\alpha\\
&\leqslant\tfrac{1}{7}\norm{z^\alpha_1}_\alpha\\
& =\tfrac{1}{7}\norm{z^\alpha_1}_{\alpha+1}\\
& < \frac{\norm{z_1^{\alpha}}_{\alpha+1}}{3^1}
}
and 
\maths{
\Norm{(y_i - z^{\alpha+1}_n)^+}_{\alpha+1} 
&=\Norm{(y_i - z^{\alpha}_n)^+}_{\alpha+1} \\
&= \tfrac{1}{7}\Norm{(y_{i}-z^\alpha_n)^+}_\alpha\\
&\leqslant  \frac{1}{7}\frac{\|{y}_{i}\|_\alpha}{3^{i}} \\
&<  \frac{\norm{y_i}_{\alpha+1}}{3^{i+1}}.
} 
Similarly, 
\maths{
\norm{y_1 -z_1^{\alpha} }_{\alpha+1}
& = \tfrac17\Norm{y_1-z^\alpha_1}_\alpha \leqslant \frac{2}{7}<  \frac{\norm{z_1^\alpha}_{\alpha+1}}{3^1}
}
and 
\maths{
\norm{{y}_{i+1} - {y}_i}_{\alpha+1} 
= \tfrac17\Norm{y_{i+1}-y_{i}}_\alpha
\leqslant \frac17\frac{\norm{y_i}_\alpha}{3^{i}} 
< \frac{\norm{y_i}_{\alpha+1}}{3^{i+1}}.
} 
Together, these inequalities establish that $(z_1^\alpha,y_1,\ldots, y_n)\in \Psi\big((z^{\alpha+1}_n)\big)$ whenever $(y_1,\ldots, y_n)\in T_\alpha$ and hence that $T_{\alpha+1}\subseteq \Psi\big((z^{\alpha+1}_n)\big)$.

	\
	
{\bf Limit case.} Suppose that $\alpha_1 <\alpha_2 < \ldots<\alpha = \lim_n \alpha_n$. Assume also that the construction has been done for all ordinals smaller than $\alpha$. We first consider the $\ell_\infty$-sum of the Banach lattices $X_{\alpha_n}$,
$$
X^\infty_\alpha 
= \bigoplus_{\ell_\infty}\MGd{X_{\alpha_n} }{ n\in \N} 
= \MGd{x=(x^1,x^2,\ldots)\in \prod_{n\in \N}X_{\alpha_n} }
{ \sup_n\|x^n\|_{\alpha_n}<\infty}.
$$
Endowed with coordinatewise operations and the norm $\|x\|_\alpha = \sup_n \|x^n\|_{\alpha_n}$, this is a Banach lattice. Furthermore, because each $\phi_{\alpha_n}$ is a Banach lattice homomorphism of norm 1, it follows that 
$$
X^\phi_\alpha =\MGd{x=(x^1,x^2,\ldots)\in X^\infty_\alpha }{ \lim_n \phi_{\alpha_n}(x^n) \text{ exists}}
$$ 
is a Banach sublattice of $X^\infty_\alpha$ and that $\phi\colon X^\phi_\alpha\to \R$ defined by
$$
\phi\big((x^n)\big)=\lim_n\phi_{\alpha_n}(x^n)
$$
is a lattice homomorphism of norm $1$. Observe also that $$X^\infty_\alpha\cap \prod_n S_{\alpha_n}\subseteq X^\phi_\alpha.
$$
Unfortunately, there is no reason for $X^\phi_\alpha$ to be separable and so $X_\alpha$ will be chosen to be a specific  separable Banach sublattice of $X^\phi_\alpha$.

For this we first define the uncountable tree
$T^\phi_\alpha$ of all finite strings of elements of $X^\phi_\alpha$ of the form 
$$
\Big(
\big(\underbrace{0,0,\ldots,0}_{k-1},{y_1^k},{y_1^{k+1}},{y_1^{k+2}},\ldots\big)
\,,\;\; \ldots\;\; , \, 
\big(\underbrace{0,0,\ldots,0}_{k-1},{y_n^k},{y_n^{k+1}},{y_n^{k+2}},\ldots\big)
\Big),
$$
where $k\geqslant 1$ and $(y_1^m,y_2^m,\ldots, y_n^m)\in T_{\alpha_m}$ for all $m\geqslant k$. We observe that $\rho(T^\phi_\alpha)\geqslant \alpha+1.$ 
This is because the rank of an element of the tree as above is given by 
$$
\min\Mgd{\rho_{T_{\alpha_m}}\big((y_1^m,y_2^m,\ldots, y_n^m)\big) } { m\geqslant k},
$$ 
and therefore, by the inductive hypothesis, we can find strings of rank at least $\alpha_m$ for every $m$, and the rank of the root $\emptyset$ is $\alpha$. 

By induction on countable ordinals $\beta$, it is easily seen that, if a tree $\Upsilon$ satisfies $\rho(\Upsilon)\geqslant \beta$, then there is a countable subtree $\Upsilon'\subseteq \Upsilon$ with $\rho(\Upsilon')\geqslant \beta$. 
Applying this to $\Upsilon=T^\phi_\alpha$, we take a countable tree $T_\alpha\subseteq T^\phi_\alpha$ such that $\rho(T_\alpha)>\alpha$.

We define $X_\alpha$ to be the separable Banach sublattice of $X^\phi_\alpha$ generated by 
\begin{enumerate}
    \item all elements of $X^\phi_\alpha$ that appear in the tree $T_\alpha$, 
    \item all eventually zero vectors $(x_1,\ldots,x_n,0,0,\ldots)\in X^\phi_\alpha$,
    \item vectors $z^\alpha_k = \left({z^{\alpha_1}_k},{z^{\alpha_2}_k},{z^{\alpha_3}_k},\ldots\right)$ for  $k<\omega$.  
\end{enumerate}

If $B_{\alpha_n}$ is the countable $\pi$-basis for $X_{\alpha_n}$, then the collection $B_\alpha$ of vectors of the form $(0,\ldots,0,b,0,0,\ldots)$
with $b\in B_{\alpha_n}$ in the $n$th position form a countable $\pi$-basis for $X_\alpha$. The sequence $z^\alpha_k = \left({z^{\alpha_1}_k},{z^{\alpha_2}_k},{z^{\alpha_3}_k},\ldots\right)$ that we already defined is contained in $S_\alpha$ and satisfies 
$$
z^\alpha_1\geqslant z^\alpha_2\geqslant \ldots> 0=\inf_kz^\alpha_k.
$$

The nodes of the tree $T_\alpha$ indeed belong to $S_\alpha^{<\mathbb{N}}$ and by construction we have that $\rho(T_\alpha)>\alpha$. It thus remains to show that $T_{\alpha}\subseteq  \Psi\big((z^{\alpha}_m)\big)$. 
First,
\maths{
\BNORM{
\Big(\underbrace{0,0,\ldots,0}_{k-1}&,{y_{i+1}^k},y_{i+1}^{k+1},{y_{i+1}^{k+2}},\ldots\Big)
-
\Big(\underbrace{0,0,\ldots,0}_{k-1},{y_i^k},{y_i^{k+1}},{y_i^{k+2}},\ldots\Big)
}_\alpha
\\
&
=\BNORM{
\Big(\underbrace{0,0,\ldots,0}_{k-1},  \big(  {y_{i+1}^k-y_i^k}\big),   \big(  {y_{i+1}^{k+1}-y_i^{k+1}}\big),    \big(y_{i+1}^{k+2}-y_i^{k+2}\big),\ldots\Big)
}_\alpha
\\
&
=\sup_{m\geqslant k}\NORM{ {y_{i+1}^m-y_i^m} }_{\alpha_m}
\\
&
\leqslant \sup_{m\geqslant k}\tfrac 1{3^i}\Norm{ {y_i^m} }_{\alpha_m}
\\
&
\leqslant \frac 1{3^i}
\NORM{
\Big(\underbrace{0,0,\ldots,0}_{k-1},{y_{i}^k},{y_{i}^{k+1}},{y_{i}^{k+2}},\ldots\Big)
}_\alpha.
}
Similarly, for all $k$ and $m$,
\maths{
\BNORM{
\Bigg(&\Big(\underbrace{0,0,\ldots,0}_{k-1},{y_{i}^k},{y_{i}^{k+1}},{y_{i}^{k+2}},\ldots\Big)
-
z^\alpha_m\Bigg)^+
}_\alpha
\\
&
=\BNORM{
\Bigg(\Big(\underbrace{0,0,\ldots,0}_{k-1},{y_{i}^k},{y_{i}^{k+1}},{y_{i}^{k+2}},\ldots\Big)
-
\Big({z^{\alpha_1}_m},{z^{\alpha_2}_m},{z^{\alpha_3}_m},\ldots\Big)\Bigg)^+
}_\alpha
\\
&
=\BNORM{\Big(
\underbrace{0,0,\ldots,0}_{k-1},
{\big({y_{i}^k}-{z^{\alpha_k}_m}\big)^+},
{\big({y_{i}^{k+1}}-{z^{\alpha_{k+1}}_{m}}\big)^+},{\big({y_{i}^{k+2}}-{z^{\alpha_{k+2}}_{m}}\big)^+},\ldots\Big)
}_\alpha
\\
&
=\sup_{r\geqslant k}    \NORM{        \big(    y_{i}^r      -     z^{\alpha_r}_m     \big)^+            }_{\alpha_r} 
\\
&
\leqslant \sup_{r\geqslant k} \tfrac1{3^i}\Norm{y_{i}^r}_{\alpha_r} 
\\
&
\leqslant \frac 1{3^i}
\NORM{
\Big(\underbrace{0,0,\ldots,0}_{k-1},{y_{i}^k},{y_{i}^{k+1}},{y_{i}^{k+2}},\ldots\Big)
}_\alpha.
} 
This completes the verification of properties (1)-(5) and hence the inductive step.
\end{proof}

\end{document}